\documentclass[12pt]{amsart}
\pdfoutput=1

\usepackage[nobysame,abbrev,alphabetic]{amsrefs}
\usepackage{amssymb}
\usepackage[all]{xy}
\usepackage{stmaryrd}
\usepackage{hyperref}
\hypersetup{citecolor=blue}

\DeclareMathOperator{\cd}{cd}

\DeclareMathOperator{\Hom}{Hom}
\DeclareMathOperator{\id}{id}

\DeclareMathOperator{\res}{res}

\newcommand{\discup}{\ \ensuremath{\mathaccent\cdot\cup}}

\newcommand{\gr}{\mathrm{gr}}
\newcommand{\nek}{,\ldots,}

\newcommand{\inv}{^{-1}}
\newcommand{\isom}{\cong}
\newcommand{\lex}{\mathrm{lex}}

\newcommand{\pr}{\mathrm{pr}}

\newcommand{\Shu}{\mathrm{Sh}}
\newcommand{\trg}{\mathrm{trg}}
\newcommand{\tensor}{\otimes}

\newtheorem{thm}{Theorem}[section]
\newtheorem{cor}[thm]{Corollary}
\newtheorem{lem}[thm]{Lemma}
\newtheorem{prop}[thm]{Proposition}
\newtheorem{defin}[thm]{Definition}
\newtheorem{exam}[thm]{Example}
\newtheorem{examples}[thm]{Examples}
\newtheorem{rem}[thm]{Remark}

\newtheorem*{thmA}{Theorem A}
\newtheorem*{thmB}{Theorem B}
\swapnumbers

\numberwithin{equation}{section}

\newcommand{\alp}{\alpha}
\newcommand{\gam}{\gamma}

\newcommand{\del}{\delta}

\newcommand{\eps}{\epsilon}

\newcommand{\Lam}{\Lambda}
\newcommand{\sig}{\sigma}

\newcommand{\dbF}{\mathbb{F}}

\newcommand{\dbQ}{\mathbb{Q}}

\newcommand{\dbU}{\mathbb{U}}

\newcommand{\dbZ}{\mathbb{Z}}

\newcommand{\grz}{\mathfrak{z}}

\newcommand{\calM}{\mathcal{M}}
\newcommand{\calN}{\mathcal{N}}
\newcommand{\calR}{\mathcal{R}}

\DeclareFontFamily{U}{wncy}{}
\DeclareFontShape{U}{wncy}{m}{n}{<->wncyr10}{}
\DeclareSymbolFont{mcy}{U}{wncy}{m}{n}
\DeclareMathSymbol{\Sha}{\mathord}{mcy}{"58}
\DeclareMathSymbol{\sha}{\mathord}{mcy}{"78}

\begin{document}

\title{Mild Pro-$p$ Groups and Ordered Monoids}

\author{Ido Efrat}
\address{Earl Katz Family Chair in Pure Mathematics\\
Department of Mathematics\\
Ben-Gurion University of the Negev\\
P.O.\ Box 653, Be'er-Sheva 84105\\
Israel} \email{efrat@bgu.ac.il}

\thanks{This work was supported by the Israel Science Foundation (grant No.\ 569/21). }

\keywords{Mild groups,  Ordered monoids, Massey products, $p$-Zassenhaus filtration}

\subjclass[2020]{Primary 20E18, Secondary 12G05,  20J06}

\setcounter{tocdepth}{1}

\maketitle

\begin{abstract}
We prove a criterion for the mildness of a finitely presented pro-$p$ group $G$.
It implies as a special case a cohomological mildness criterion via Massey products, generalizing results due to Schmidt and G\"artner.
It subsumes Labute's non-singular circuit criterion.
We further show connections with the triangle condition for the mildness of pro-$p$ right-angled Artin groups, due to Quadrelli, Snopce and Vannacci.  
\end{abstract}

\tableofcontents

\section{Introduction}
\subsection{Mild presentations}
We fix a prime $p$ and consider a pro-$p$ group $G$ with a minimal finite presentation.
Thus $G=F/R$, where $F$ is a free pro-$p$ group on a basis $X=\{x_1\nek x_d\}$ with $1\leq d<\infty$ and $R$ is a closed normal subgroup of $F$ generated by elements $r_1\nek r_m$ of $F$, with $1\leq m<\infty$ minimal.
In some fortunate cases, general structural properties of the presentation yield significant insights into the structure of $G$.
As a very simple example, the condition $m=0$ means that $G$ is a free pro-$p$ group, which is equivalent to having cohomological dimension $\cd(G)$ at most $1$.
An additional and much deeper example is the celebrated Golod--Shafarevich theorem:
If $d^2\geq 4m$, then $G$ is infinite.

In this paper we study another such situation, namely that of \textsl{mild} pro-$p$ groups $G$.
While not free pro-$p$, such groups admit a minimal presentation in which the initial forms of the relations are ``free in the maximal possible way''.
This leads to many attractive consequences for $G$, for example, $\cd(G)=2$.
Alternatively, $G$ is mild if and only if the Golod--Shafarevich inequality, in its graded-algebra version, becomes an equality for a certain graded algebra naturally associated to a presentation of $G$.
We refer to \S\ref{section on mild groups} for the precise definition, which is due to Labute and Anick.
We mention at this point that the associated graded algebra depends not only on the presentation of $G$ but also on fixing a filtration of $F$, typical examples being the $p$-Zassenhaus filtration $F_{(n)}$, $n\geq1$, or the lower $p$-central filtration $F^{(n)}$, $n\geq1$ (see \S\ref{section on the Zassenhaus filtration}).

Recent work has revealed a rich web of connections between mild groups and major problems across algebra and arithmetic:
In group theory (the Gildenhuys--Serre conjecture on one-relator pro-$p$ groups \cite{Gildenhuys68}, \cite{Gartner15}*{\S5}),
in algebraic number theory (the structure of restricted ramification Galois groups of number fields \cite{Labute06}, \cite{Schmidt06}, \cite{Schmidt07}, \cite{Vogel05}, \cite{Maire14}),
in Galois cohomology (higher Massey products and the Koszulity Conjecture \cite{MinacPasiniQuadrelliTan22}), and
in arithmetic geometry (the tame Fontaine--Mazur Conjecture \cite{Labute14}, \cite{Maire07}).

Verifying ``by hand'' that a given finitely presented pro-$p$ group is mild is generally difficult.
Several sufficient conditions for mildness, to be discussed below, were proved in works by Anick \cite{Anick82}, Labute \cite{Labute06}, Schmidt \cite{Schmidt10}, \cite{Schmidt07}, G\"artner \cite{Gartner15}, 
Labute--Min\'a\v c \cite{LabuteMinac11}, Quadrelli--Snopce--Vannacci \cite{QuadrelliSnopceVannacci22} and others.  
See also works by Forr\'e \cite{Forre11} and Maire \cite{Maire14}.
These criteria were developed in diverse contexts, including Lie algebras, Galois cohomology, homological algebra, combinatorial group theory and graph theory.

\subsection{Main results}
The aim of the current paper is to give a unified approach to mildness, based on connections between the combinatorics of words and profinite group cohomology.
In particular, we prove a general mildness criterion that subsumes several of the main known criteria as special cases, and in addition implies mildness in a variety of new situations.

Fundamental to our approach is the \textsl{Massey product} in profinite group cohomology.
We write $H^i(G)=H^i(G,\dbF_p)$ for the $i$th cohomology group of $G$ with the trivial action on $\dbF_p$.
The $n$-fold Massey product is in general a multi-valued map
\[
\langle \cdot\nek\cdot\rangle\colon H^1(G)^n\to H^2(G).
\]
However, in the situations considered here it is a single valued and multi-linear map.
When $n=2$ it consists of the cup product.
See \S\ref{subsection on Massey products} and \S\ref{subsection on the p-Zassenhaus invariant}.

To state our general mildness criterion, let  $X^*$ be the free monoid on the basis $X$ of $F$.
We consider the elements of $X^*$ as words in the alphabet $X$.
We also take $\chi_x$, $x\in X$, to be the linear basis of $H^1(G)=\Hom_{\rm cont}(G,\dbZ/p)$ which is dual to $X$, i.e., $\chi_x(x)=1$ and $\chi_x(x')=0$ for $x\neq x'$.
For every $w=(x_1\cdots x_n)\in X^n$ we define in a canonical way a Massey product element $\alp_w\in\langle\chi_{x_1}\nek\chi_{x_n}\rangle\subseteq H^2(G)$  (See \S\ref{subsection on Magnus respresentations for words}). 

Our main criterion for mildness is based on finding \textsl{ordered monoid structures} on $X^*$ for which a special set of cohomology elements $\alp_w$ satisfies a maximality property and simultaneously generates the whole group $H^2(G)$:

\begin{thmA}
\label{Theorem A}
Let $\leq$ be a total order on $X^*$ such that $(X^*,\leq)$ is an ordered monoid.
Suppose that $r_1\nek r_m\in F_{(n)}\setminus F_{(n+1)}$.
Let $B\subseteq X^n$ satisfy:
\begin{enumerate}
\item[(a)]
No proper prefix of a word in $B$ is a proper suffix of a word in $B$.
\item[(b)]
If $w_1\in B$ and $w_2\in X^n$ satisfy $\alp_{w_1},\alp_{w_2}\neq0$ and $w_1\leq w_2$, then also $w_2\in B$.
\item[(c)]
The cohomology elements $\alp_w$, $w\in B$, generate $H^2(G)$.
\end{enumerate}
Then $G=F/R$ is mild with respect to the $p$-Zassenhaus filtration. 
\end{thmA}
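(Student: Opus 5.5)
We work with the Magnus algebra. Over the filtration $F_{(n)}$, the associated graded restricted Lie algebra of $F$ embeds in the graded algebra $\dbF_p\langle X\rangle$ of noncommutative polynomials, via the Magnus map $x\mapsto 1+x$. Mildness of $G=F/R$ with respect to $F_{(n)}$ amounts to the initial forms $\rho_1\nek\rho_m$ of $r_1\nek r_m$ being strongly free in $\dbF_p\langle X\rangle$. Here $\rho_i$ is the homogeneous degree-$n$ component of the Magnus expansion of $r_i$, i.e.\ $\rho_i=\sum_{w\in X^n}\epsilon_w(r_i)\,w$. We verify strong freeness by Anick's combinatorial criterion, taken from \S\ref{section on mild groups}. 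That criterion asks for a monomial order on $X^*$ compatible with multiplication, which is exactly the ordered monoid hypothesis. With respect to it, the leading monomials of a suitable generating set of the ideal generated by the $\rho_i$ must form a combinatorially free set, meaning no proper prefix of one is a proper suffix of another. Condition (a) is precisely this overlap-freeness for words in $B$.

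\textbf{Step 1: translate cohomology into coefficients.} For $r\in F_{(n)}$ we use the duality from \S\ref{subsection on Magnus respresentations for words} and \S\ref{subsection on the p-Zassenhaus invariant}. Under the transgression isomorphism $H^2(G)\isom (R/R^p[R,F])^\vee$, the Massey element $\alp_w$ corresponds to the functional $r\mapsto \epsilon_w(r)$. Hence $\alp_w=\sum_i \epsilon_w(r_i)\,\beta_i$, where $\beta_i$ is the basis of $H^2(G)$ dual to $r_1\nek r_m$ (this uses minimality of the presentation). Consequently $\alp_w\neq0$ if and only if $w$ occurs in some $\rho_i$. The span of the $\alp_w$ is then the column space of the $m\times|X^n|$ coefficient matrix $M=(\epsilon_w(r_i))$.

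\textbf{Step 2: triangulate.} Condition (c) says the columns of $M$ indexed by $B$ have rank $m$. We order the words in $X^n$ by $\leq$ and perform Gaussian elimination on the rows, which replaces $r_1\nek r_m$ by a new basis $r_i'$ of $R$ modulo $R^p[R,F]$. This does not change the ideal or the mildness question, since mildness depends only on the span of the initial forms. We arrange that the $\rho_i'$ have pairwise distinct leading monomials $w_i$ (leading meaning $\leq$-maximal among monomials with nonzero coefficient). The rank condition restricted to $B$ should allow each $w_i$ to be chosen from $B$. The argument is as follows. Suppose the leading monomial $w$ of some row is not in $B$. If a word $w'\in B$ occurs with nonzero $\alp_{w'}$ and $w'\leq w$, then (b) forces $w\in B$, a contradiction. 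So every monomial occurring in that row and lying in $B$ must be larger than the leading monomial, which is impossible. Hence that row has no $B$-support at all. Reduced-echelon rows with no $B$-support contradict full rank on the $B$-columns once $m$ pivots are counted. I expect this bookkeeping, matching pivots to $B$ using (b) and (c), to be the main obstacle. In particular, one must check carefully that (b) is applied only to words with $\alp\neq0$, which holds for all monomials appearing in the $\rho_i$.

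\textbf{Step 3: conclude.} The leading monomials $w_1\nek w_m$ lie in $B$, so they are combinatorially free by (a). Anick's criterion, in the form quoted earlier in the paper (leading terms with respect to an admissible monoid order forming a combinatorially free set), then gives that $\rho_1'\nek\rho_m'$ is strongly free. Therefore $G$ is mild with respect to the $p$-Zassenhaus filtration.
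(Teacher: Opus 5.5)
Your proposal is correct and follows essentially the paper's route: you use the transgression pairing to turn (c) into full rank $m$ of the coefficient matrix on the $B$-columns, row-reduce, and conclude with Anick's criterion. The paper echelonizes only the $B$-columns, ordered by $\leq^{\rm op}$, and then uses (b) to show that each row's overall $\leq$-maximal word is its maximal $B$-word; you echelonize all of $X^n$ and use (b) plus the rank to force the pivots into $B$, which is equivalent bookkeeping (a single row with no $B$-support already gives a zero row in a rank-$m$ matrix with $m$ rows). One small addition: combinatorial freeness also requires that no $w_i$ be a middle factor of another $w_j$, but this is automatic here because the $w_i$ are distinct words of the same length $n$.
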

See Theorem \ref{main theorem on mildness} for a more general statement.

The applications of Theorem A rely on the construction of total orders on the free monoid $X^*$, specifically tailored to the type of presentation under consideration.
As a primary application, we establish the following generalization of the mildness criteria due to Schmidt and Gärtner (Theorem \ref{theorem on Massey products}):

\begin{thmB}
\label{Theorem B}
Let $n$ be maximal such that $G$ has a presentation as above with $R\subseteq F_{(n)}$.
Let $s,k_0,k_1\nek k_s$ be positive integers with $n=k_0+k_1+\cdots+k_s$.
Consider nonzero $\dbF_p$-linear subspaces $U_0\nek U_s$ of $H^1(G)$ such that $H^1(G)=U_0\oplus\cdots\oplus U_s$.
Assume that:
\begin{enumerate}
\item[(a)]
One has $\langle\varphi_1\nek\varphi_n\rangle=0$ whenever $\varphi_1\nek\varphi_n\in H^1(G)$ and more than $k_j$ of them belong to $U_j$ for some $1\leq j\leq s$.
\item[(b)]
The group $H^2(G)$ is generated by the image of
\[
\langle\cdot\nek\cdot\rangle\colon U_0^{k_0}\times\cdots\times U_s^{k_s}\to H^2(G).
\]
\end{enumerate}
Then $G$ has a mild presentation with respect to the $p$-Zassenhaus filtration.
\end{thmB}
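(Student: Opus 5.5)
We will deduce Theorem B from Theorem A by choosing a basis $X$ adapted to the decomposition and building an ordered monoid structure on $X^*$ for which the words of ``type $(k_0,k_1\nek k_s)$'' are maximal.

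\emph{Choice of basis.} Pick a basis of $H^1(G)$ that is a union of bases of $U_0\nek U_s$. Since the presentation is minimal, $H^1(G)\isom H^1(F)$, and we may choose a basis $X$ of $F$ dual to it. This gives a partition $X=X_0\discup X_1\discup\cdots\discup X_s$ with $\chi_x\in U_j$ for $x\in X_j$. By maximality of $n$ we have $R\subseteq F_{(n)}$ but $R\not\subseteq F_{(n+1)}$. Since $H^2(G)\isom (R/R^p[R,F])^\vee$ has dimension $m$, we may replace the $r_i$ by suitable products so that every $r_i\in F_{(n)}\setminus F_{(n+1)}$. The relevant earlier results say that $\alp_w$ is the Massey product $\langle\chi_{x_1}\nek\chi_{x_n}\rangle$, which is single valued and multilinear in this range. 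Hence by multilinearity hypothesis (b) says that the $\alp_w$ with $w$ of type $(k_0\nek k_s)$ generate $H^2(G)$. Here $w$ has type $(k_0\nek k_s)$ if it contains exactly $k_j$ letters from $X_j$ for each $j$. Hypothesis (a) says that $\alp_w=0$ whenever $w$ contains more than $k_j$ letters from some $X_j$ with $j\geq1$.

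\emph{The order.} For $w\in X^*$ let $c(w)=(c_1(w)\nek c_s(w))\in\dbZ_{\geq0}^s$, where $c_j(w)$ counts the letters of $w$ from $X_j$. The map $c$ is a monoid homomorphism. Order $X^*$ first by $c(w)$ with respect to the lexicographic order on $\dbZ^s$, which is translation invariant, and break ties by a fixed monoid order such as deg-lex. The result is a total order compatible with concatenation on both sides, so $(X^*,\leq)$ is an ordered monoid.

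Let $B$ be the set of words in $X^n$ of type $(k_0\nek k_s)$; for $w\in X^n$ this is the same as $c(w)=(k_1\nek k_s)$. Condition (c) of Theorem A is hypothesis (b). For condition (b), suppose $w_1\in B$, $w_1\leq w_2$ and $\alp_{w_2}\neq0$. Then $c(w_2)\geq_{\lex}(k_1\nek k_s)$. By hypothesis (a), $c_j(w_2)\leq k_j$ for all $j\geq1$, and lexicographic comparison then forces $c(w_2)=(k_1\nek k_s)$. Hence $w_2\in B$.

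\emph{Main obstacle: condition (a) of Theorem A.} Words in $B$ do not automatically satisfy the prefix/suffix condition, so $B$ must be restricted. The plan is to use the ordering to replace $B$ by a smaller set of words having a fixed shape, for instance words beginning with a letter of $X_0$ and ending with a letter of $X_s$. In that case no proper prefix is a proper suffix, because suffixes of such words never start in $X_0$. We then need to check two things. First, this smaller set still generates $H^2(G)$; this should follow from shuffle relations among Massey products, which express an $\alp_w$ of type $(k_0\nek k_s)$ as a combination of ones of the chosen shape. Second, it is still upward closed. To get upward closure, refine the tie-break so that, among words of equal $c$, those with the chosen shape are largest. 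Doing this while keeping the order a monoid order is delicate, and this is the step we expect to be hardest. If it resists, we would fall back on the more general form of Theorem A from Theorem \ref{main theorem on mildness}, hoping its combinatorial condition is weaker than (a).
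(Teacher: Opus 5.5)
Your overall framework is right, and your count-based argument for upward closure (compare letter counts first, then use hypothesis (a) to force $c(w_2)=c(w_1)$) matches the first step of the paper's argument. But there is a genuine gap at the step you flag yourself: you never produce a set $B$ that is simultaneously combinatorially free, upward closed and generating. The repair you suggest does not work. Words of the right type that begin in $X_0$ and end in $X_s$ need not satisfy the prefix/suffix condition. For $s=1$, $k_0=k_1=2$, $x\in X_0$, $y\in X_1$, the word $(xyxy)$ has $(xy)$ both as a proper prefix and as a proper suffix, so ``suffixes never start in $X_0$'' is false. The refined tie-break is never constructed. The fallback to Theorem \ref{main theorem on mildness} gains nothing, because for words of a common length combinatorial freeness is exactly the prefix/suffix condition.

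The missing idea begins with hypothesis (b), which you weakened. The domain $U_0^{k_0}\times\cdots\times U_s^{k_s}$ is an \emph{ordered} product. By multilinearity, the $\alp_w$ with $w$ in the block set $B=X_0^{k_0}\times X_1^{k_1}\times\cdots\times X_s^{k_s}$ (first $k_0$ letters from $X_0$, the next $k_1$ from $X_1$, and so on) already generate $H^2(G)$, so no shuffle relations are needed. This $B$ is combinatorially free. A proper prefix that equals a proper suffix must end in $X_s$, so its length exceeds $n-k_s\geq k_0$ and it contains all $k_0$ letters from $X_0$. A proper suffix, however, omits the first letter, which lies in $X_0$, so it has fewer. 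What remains, and is the real content of the proof, is an ordered monoid structure in which this block set is upward closed. The paper inserts after each count $\sig_j^*$ the positional statistic $\sig_j^\#(w)=\sum_{a_i\in X_j}i$ and orders lexicographically by $g=(|\cdot|,\sig_s^*,\sig_s^\#,\sig_{s-1}^*,\sig_{s-1}^\#,\ldots,\sig_1^*,\sig_1^\#,\id)$. The statistic $\sig_j^\#$ is not additive: $\sig_j^\#(wu)=|w|\sig_j^*(u)+\sig_j^\#(w)+\sig_j^\#(u)$. The correction term, however, depends only on coordinates compared earlier, and Lemma \ref{g} shows that such a twisted lexicographic order is still a monoid order. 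Comparing the $X_s$-count and then $\sig_s^\#$, together with hypothesis (a), forces the $X_s$-letters of any nonvanishing $w_2\geq w_1\in B$ into the last $k_s$ positions. The same argument then places the $X_{s-1}$-letters in the preceding $k_{s-1}$ positions, and so on, so $w_2\in B$ (Proposition \ref{A max}).
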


Specifically, the case $s=1$, $k_0=k_1=1$, $n=2$ of Theorem B recovers results from \cite{Schmidt10,Schmidt07} for $p\neq2$, complemented in \cite{LabuteMinac11} for $p=2$.
The case $s=1$, $n\geq2$ is due to G\"artner \cite{Gartner15}.
The general case is based on a subtle construction of a total order on $X^*$ with special properties as required in Theorem A.

\subsection{Connections to other criteria}
A mildness criterion of a different flavor was given by Labute \cite{Labute06}, motivated by the structure of  
the maximal pro-$p$ Galois group $G_{\dbQ,S}(p)$ of $\dbQ$ unramified outside a finite set $S$ of primes.
Under some assumptions on $S$, the group has a presentation of a particularly neat form, known as a \textsl{Koch presentation}.
Namely, $1\leq m\leq d$ and the defining relations $r_1\nek r_m$ in $R$ satisfy 
\[
r_j=x_j^{pa_j}\prod_{{k=1}\atop{k\neq j}}^d[x_j,x_k]^{a_{jk}}\pmod{F_{(3)}}
\]
with $0\leq a_j,a_{jk}\leq p-1$.
The \textsl{linking diagram} associated with the presentation is the labelled directed graph with vertices $x_1\nek x_d$ and edges with label $a_{ij}$ from $x_i$ to $x_j$ when $a_{ij}\neq0$.
Labute proves that, if $p\neq2$, $d\geq4$ and there is a circuit in the graph passing through all vertices and satisfying certain technical conditions on the labels along it, then $G$ is mild with respect to the lower $p$-central filtration.
The case $p=2$ was completed in \cite{LabuteMinac11}.
As we show in \S\ref{section on the non-singular circuit criteria}, this criterion is also a special case of Theorem A, when one chooses the total order on $X^*$ and the set $B$ appropriately. 

A mildness criterion of yet another kind is given by Quadrelli, Snopce and Vannacci \cite{QuadrelliSnopceVannacci22}, building on homological results of Weigel \cite{Weigel15}.
Here to a (combinatorial) graph $\Gamma$ one associates the \textsl{pro-$p$ right-angled Artin group} $G_\Gamma$, with vertices of $\Gamma$ as generators and relations $[v,v']=1$ for every edge $(v,v')$
(the authors in fact allow certain relations of the form $[v,v']v^{p\alpha}(v')^{p\alpha'}$ for $\alpha,\alpha'\in\dbZ_p$).
They prove that $G_\Gamma$ is mild if and only if $\Gamma$ contains no triangles as induced subgraphs.
In \S\ref{section on pro-p RAAGs} we show that Theorem A implies the mildness of $G_\Gamma$ provided that $\Gamma$ is bipartite.
Note that in this case $\Gamma$ indeed contains no triangles.

\subsection{The structure of the paper}
In \S\ref{section on ordered monoids} we study ordered structures on monoids and present a lexicographic construction of such a structure.
This construction is developed further in \S\ref{section on partictions and mildness}, where it serves as a key step toward the proof of Theorem B.  
In \S\ref{section on Magnus representations} and \S\ref{section on unitriangular cohomology elements} we construct the Massey product elements $\alpha_w$ using pro-$p$ Magnus theory and explain their connection to relations via the transgression pairing.  
In \S\ref{section on coefficient matrices} we give conditions equivalent to condition (c) of Theorem A, formulated in terms of matrices of Magnus coefficients.  
In \S\ref{section on mild groups} we recall the definition of mild groups, presenting it in a uniform way that applies to both the $p$-Zassenhaus filtration and the lower $p$-central filtration.  Building on these preparations, in \S\ref{section on the mildness criterion} we state and prove a slightly stronger version of Theorem A.  
Combining this result with the lexicographic ordered structure constructed earlier on the free monoid $X^*$, we obtain in \S\ref{section on partictions and mildness} a word-combinatorial criterion for mildness (Theorem \ref{partition criterion}).  
After recalling some basic facts about the standard filtrations in \S\ref{section on the Zassenhaus filtration}, we translate Theorem \ref{partition criterion} into the language of Massey products in \S\ref{section on Massey product criteria}, yielding Theorem B.  
These results go beyond the earlier work of Schmidt and G\"artner, providing a wealth of new cohomological mildness criteria.
Here, shuffle relations for Massey products (as found in \cite{Efrat24} and \cite{Vogel04}) allow us to substantially weaken the assumptions on the vanishing of Massey products required by these criteria.  
Finally, in \S\ref{section on the non-singular circuit criteria} and \S\ref{section on pro-p RAAGs} we show that Labute’s linking diagram criterion, as well as the triangle criterion of \cite{QuadrelliSnopceVannacci22} for bipartite graphs, are special cases of Theorem A.

\section{Ordered monoids}
\label{section on ordered monoids}
\subsection{Generalities on monoids}
\label{subsection on generalities on monoids}
Let $\calM$ be a monoid with unit element $1_\calM$.
We denote its multiplication map by $(w,u)\mapsto wu$.
Given $w,w'\in \calM$, we say that $w$ is a \textsl{left} (resp., \textsl{right}, \textsl{middle}) factor of $w'$ if $w'=wv$ (resp., $w'=uw$, $w'=uwv$) for some $u,v\in\calM$.
The monoid $\calM$ is \textsl{cancellative} if for every $w,w',u\in\calM$, any of the equalities $wu=w'u$, $uw=uw'$ implies that $w=w'$.

\begin{defin}
\rm
An \textsl{ordered monoid} $(\calM,\leq)$ consists of a monoid $\calM$ and a total order $\leq$ on the underlying set of $\calM$ such that:
\begin{enumerate}
\item[(i)]
$1_\calM$ is minimal in $\calM$ with respect to $\leq$;
\item[(ii)]
The product map $\calM\times \calM\to \calM$ is order-preserving, where we partially order $\calM\times \calM$ by the product order.
\end{enumerate}
\end{defin}

\begin{rem}
\label{remarks on ordered monoids}
\rm
(1) \quad
For $w,w',u,u'$ in a cancellative ordered monoid such that $w\leq w'$ and $u\leq u'$, with at least one of the inequalities being strict, one has $wu<w'u'$.

\medskip

(2) \quad
Condition (ii) is equivalent to: 
\begin{enumerate}
\item[(ii')]
For every words $u,v,w,w'\in \calM$ with $w\leq w'$ one has $uwv\leq uw'v$.
\end{enumerate}

\medskip
(3) \quad
In the context of the monoid of associative monomials in a given set of variables, a total order satisfying (i) and (ii') is called in \cite{Gartner15}*{Def.\ 3.3} a \textsl{multiplicative order}. 
\end{rem}

\subsection{A lexicographic construction}
\label{subsection on a lexicographic construction}

The following construction of ordered monoids will be fundamental for the proof of the Massey product criterion for mildness in \S\ref{section on partictions and mildness}--\S\ref{section on Massey product criteria}.

\begin{lem}
\label{g}
Let $t\geq1$ and let $(\calM_i,\leq_i)$, $i=1,2\nek t$, be cancellative ordered monoids.
Let $\leq_\lex$ be the lexicographic order on $\prod_{i=1}^t\calM_i$ induced by the orders $\leq_i$.
Consider a monoid $\calN$ and maps $g_i\colon \calN\to\calM_i$, $i=1,2\nek t$, such that $g_i(1_\calN)=1_{\calM_i}$ for every $i$ and such that the map 
\[
g=(g_1\nek g_t)\colon\calN\to\prod_{i=1}^t\calM_i
\]
is injective.
Assume further that there are maps
\[
h_i\colon \Bigl(\prod_{j=1}^{i-1}\calM_j\Bigr)^2\to\calM_i,\quad i=1,2\nek t,
\]
where $h_1=1_{\calM_1}$, such that for every $1\leq i\leq t$ and $w,u\in\calN$ we have
\begin{equation}
\label{connection between maps}
g_i(wu)=h_i\bigl(g_1(w)\nek g_{i-1}(w),g_1(u)\nek g_{i-1}(u)\bigr)g_i(w)g_i(u).
\end{equation}
Then $(\calN,\leq_g)$ is an ordered monoid, where $\leq_g$ is given by 
\[
w\leq_gw'\quad\Longleftrightarrow\quad g(w)\leq_\lex g(w').
\] 
\end{lem}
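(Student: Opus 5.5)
First I check that $\leq_g$ is a total order on $\calN$. The lexicographic order $\leq_\lex$ on $\prod_{i=1}^t\calM_i$ is a total order, since each $\leq_i$ is total. Because $g$ is injective, pulling $\leq_\lex$ back along $g$ gives a total order on $\calN$; antisymmetry is exactly where injectivity is used.

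Next, minimality of $1_\calN$ (condition (i)). For any $w\in\calN$ we have $g_i(1_\calN)=1_{\calM_i}\leq_i g_i(w)$ for every $i$, because $1_{\calM_i}$ is minimal in $\calM_i$. Hence $g(1_\calN)\leq_\lex g(w)$, since componentwise inequality implies lexicographic inequality.

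The main step is condition (ii). Since the order is total, I will show that $w\leq_g w'$ and $u\leq_g u'$ imply $wu\leq_g w'u'$. If $w=w'$ and $u=u'$ there is nothing to prove. Otherwise let $a$ be the first index with $g_a(w)\neq g_a(w')$ and $b$ the first index with $g_b(u)\neq g_b(u')$, taking $a=\infty$ (resp.\ $b=\infty$) when $w=w'$ (resp.\ $u=u'$). Put $c=\min(a,b)$, which is finite.
\begin{itemize}
\item For $i<c$, the first $i-1$ coordinates of $g(w)$ and $g(w')$ agree, and likewise for $u$ and $u'$. So the $h_i$ factors in \eqref{connection between maps} coincide, and moreover $g_i(w)=g_i(w')$ and $g_i(u)=g_i(u')$. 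Hence $g_i(wu)=g_i(w'u')$.
\item At $i=c$ the $h_c$ factors again coincide; call the common value $h$. We have $g_c(w)\leq_c g_c(w')$ and $g_c(u)\leq_c g_c(u')$: each holds either as an equality or, when $c=a$ or $c=b$, as the lexicographic-first-difference inequality. At least one of the two is strict.
\item By Remark~\ref{remarks on ordered monoids}(1), using that $\calM_c$ is cancellative, $g_c(w)g_c(u)<_c g_c(w')g_c(u')$. Left multiplication by $h$ preserves $\leq_c$, and by cancellativity it preserves strictness (equality would force equality after cancelling $h$). So $g_c(wu)<_c g_c(w'u')$.
\end{itemize}
Therefore $g(wu)<_\lex g(w'u')$, and in particular $wu\leq_g w'u'$.

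The obstacle I expect is this index-$c$ step. It is where cancellativity is needed twice: once to get strictness of the product via Remark~\ref{remarks on ordered monoids}(1), and once to keep strictness after multiplying by $h_c$. It also requires carefully handling the case where one pair is equal and the other differs first at $c$.
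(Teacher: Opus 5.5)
Your proof is correct and follows essentially the same route as the paper. Your index $c=\min(a,b)$ is exactly the paper's $i+1$, where $i$ is maximal such that the first $i$ coordinates of $g(w),g(w')$ and of $g(u),g(u')$ agree; the paper likewise applies Remark~\ref{remarks on ordered monoids}(1) twice at that index, once to get strictness of the product and once after multiplying by the common $h$-factor. The only cosmetic difference is that you dispose of the case $w=w'$, $u=u'$ up front, using injectivity, whereas the paper treats it as the case $i=t$.
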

\begin{proof}
It follows from the injectivity of $g$ that $\leq_g$ is a total order on $\calN$.

We have $g(1_\calN)=(1_{\calM_1}\nek 1_{\calM_t})$, which is minimal in $(\prod_{i=1}^t\calM_i,\leq_{\rm lex})$.
Hence $1_\calN$ is minimal in $(\calN,\leq_g)$.

It remains to show that for every $w,w',u,u'\in \calN$ one has:
\[
g(w)\leq_\lex g(w'), \quad g(u)\leq_\lex g(u')\ \Rightarrow\ g(wu)\leq_\lex g(w'u').
\]
We take $0\leq i\leq t$ maximal such that
\begin{equation}
\label{xxx1}
g_j(w)=g_j(w'), \ g_j(u)=g_j(u'), \ \ j=1,2\nek i.
\end{equation}
By (\ref{connection between maps}) and (\ref{xxx1}), for every $1\leq j\leq i$ we have 
\[
\begin{split}
g_j(wu)&=h_j\bigl(g_1(w)\nek g_{j-1}(w),g_1(u)\nek g_{j-1}(u)\bigr)g_j(w)g_j(u) \\
&=h_j\bigl(g_1(w')\nek g_{j-1}(w'),g_1(u')\nek g_{j-1}(u')\bigr)g_j(w')g_j(u') \\
&=g_j(w'u').
\end{split}
\]
When $i=t$ this shows that $g(wu)=g(w'u')$, and we are done.

When $i<t$, the assumptions imply that
\[
g_{i+1}(w)\leq_{i+1} g_{i+1}(w'),\   g_{i+1}(u)\leq_{i+1} g_{i+1}(u'),
\]
and at least one of these inequalities is strict.
By Remark \ref{remarks on ordered monoids}(1),  
\[
g_{i+1}(w)g_{i+1}(u)<_{i+1}g_{i+1}(w')g_{i+1}(u').
\]
It follows again from (\ref{connection between maps}), (\ref{xxx1}) and Remark \ref{remarks on ordered monoids}(1) that  $g_{i+1}(wu)<_{i+1}g_{i+1}(w'u')$.
Therefore $g(wu)<_\lex g(w'u')$ in this case.
\end{proof}

\subsection{Free monoids}
Let $X$ be a finite nonempty set and let $X^*$ be the free monoid on $X$.
We consider $X$ as an \textsl{alphabet} and the elements of $X^*$ as (associative) \textsl{words} in $X$.
Thus the product map in $X^*$ is the concatenation $(w,u)\mapsto wu$.
The unit element of $X^*$ is the empty word, denoted $1$.
Note that $X^*$ is cancellative.
We write a word $w$ in $X^*$ as $w=(a_1\cdots a_n)$ with $a_1\nek a_n\in X$ and call $n=|w|$ its \textsl{length}.

Our approach to mildness will be based on constructions of total orders on $X^*$ making it an ordered monoid.
The two most basic constructions are as follows.
Here we fix a total order on $X$.

\begin{exam}
\rm
The lexicographic order on $X^*$ induced by the total order on $X$ makes it an ordered monoid $X^*_\lex$. 
\end{exam}

\begin{exam}
\label{length-lexicographic order}
\rm
Consider $\dbZ_{\geq0}$ as a cancellative ordered monoid with respect to addition and the natural total order.
Take in Lemma \ref{g}
\[
g=(g_1=|\cdot|,g_2=\id)\colon X^*\to\dbZ_{\geq0}\times X^*_\lex, \quad h_1=0, \quad h_2=1.
\]
We obtain that $X^*$ is an ordered monoid with respect to the \textsl{length-lexicographic} order, given by $w\leq_gw'$ if and only if $(|w|,w)\leq(|w'|,w')$ lexicographically.
\end{exam}

\subsection{Lyndon words}
\label{subsection on Lyndon words}
An element $w$ of an ordered monoid $(\calM,\leq)$ is called a \textsl{Lyndon element} if $w\neq1_\calM$ and $w$ is strictly smaller than all its proper right factors.

\begin{examples}
\label{small Lyndon words}
\rm
Let $X$ be a finite nonempty totally ordered alphabet and let $x_1<\cdots<x_d$ be its elements. 
Take $\calM=X^*$ with its lexicographic order.
We then call $w$ as above a \textsl{Lyndon word}.
For instance we have (see \cite{Efrat17}*{Example 2.1}):
\begin{enumerate}
\item[(1)]
All one-letter words $(x_i)$ are Lyndon words.
\item[(2)]
A two-letter word $(x_ix_j)$ is Lyndon if and only if $i<j$.
\item[(3)]
A three-letter word $(x_ix_jx_l)$ is Lyndon if and only if either $i<j,l$ or $i=j<l$.
\end{enumerate}
\end{examples}

\section{Magnus representations}
\label{section on Magnus representations}
In this section $K$ will stand for a profinite commutative unital ring (where ultimately we will take $K=\dbF_p$ with $p$ prime) and $X$ will be a nonempty set.
\subsection{Formal power series}
Let $K\langle X\rangle$ be the free associative $K$-algebra on $X$.
We view its elements as non-commutative polynomials in the variable set $X$ over $K$.
Let $K\langle\langle X\rangle\rangle$ be the $K$-algebra of formal power series in $X$ with coefficients in $K$.
Then  $K\langle X\rangle$ is a subalgebra of $K\langle\langle X\rangle\rangle$ and the free monoid $X^*$ of words in $X$ may be identified with the monomials (with coefficient $1$) in $K\langle X\rangle$.
Thus an element of $K\langle\langle X\rangle\rangle$ can be written as a formal sum $\sum_{w\in X^*}c_ww$ with $c_w\in K$.
It belongs to $K\langle X\rangle$ if and only if at most finitely many of the $c_w$ are nonzero.
The ring $K\langle\langle X\rangle\rangle$ is profinite \cite{Efrat14}*{\S5}.

We write $K\langle\langle X\rangle\rangle^\times$ for the multiplicative group of $K\langle\langle X\rangle\rangle$, and $K\langle\langle X\rangle\rangle^{\times,1}$ for the subset of $K\langle\langle X\rangle\rangle$ consisting of all power series with constant term $1$.
This subset is in fact a subgroup of $K\langle\langle X\rangle\rangle^\times$.
Indeed, it is clearly multiplicatively closed, and if $h\in K\langle\langle X\rangle\rangle$ has constant term $0$, then $1+h$ has a well defined inverse $\sum_{k\geq0}(-1)^kh^k$.
Moreover, $K\langle\langle X\rangle\rangle^{\times,1}$ is a profinite group.

\subsection{Unitriangular matrices}
\label{subsection on unitriangular matrices}
For $n\geq1$ let $\dbU_n(K)$ be the profinite group of all unipotent upper-triangular $(n+1)\times(n+1)$-matrices over $K$.
For every $1\leq i\leq n$, the projection map $\pr_{i,i+1}\colon \dbU_n(K)\to K^+$ on the $(i,i+1)$-entry is a profinite group homomorphism.

The set $Z_n(K)$ of all matrices in $\dbU_n(K)$ which are $1$ on the diagonal, and $0$ at all entries $(i,j)$ with $1\leq i<j\leq n+1$ and $(i,j)\neq(1,n+1)$, is a central subgroup of $\dbU_n(K)$.
It is isomorphic to the additive group $K^+$ of $K$ via
the projection map $\pr_{1,n+1}$ on entry $(1,n+1)$.
Let $\bar\dbU_n(K)=\dbU_n(K)/Z_n(K)$.
Thus there is a group extension
\begin{equation}
\label{group extension gamma}
0\to K^+\to\dbU_n(K)\to\bar\dbU_n(K)\to1.
\end{equation}

Next let $F$ be a profinite group and let $\Lam\colon F\to K\langle\langle X\rangle\rangle^{\times,1}$ be a continuous map.
For $f\in F$ we write 
\[
\Lam(f)=\sum_{w\in X^*}\eps_w(f)w
\]
with $\eps_w(f)\in K$ for every $w\in X^*$ and $\eps_1(f)=1$.
We call the maps $\eps_w\colon F\to K$ the \textsl{coefficient maps} associated with $\Lam$.

Given a word $w=(a_1\cdots a_n)$ of length $n$ in $X^*$, we define a map $\rho_w\colon F\to\dbU_n(K)$ by 
\[
\rho_w(f)=\bigl(\eps_{(a_i\cdots a_{j-1})}(f)\bigr)_{1\leq i\leq j\leq n+1}.
\]

\begin{prop}
\label{Magnus equivalent conditions} 
The following conditions are equivalent:
\begin{enumerate}
\item[(a)]
$\Lam$ is a profinite group homomorphism;
\item[(b)]
For every $f,f'\in F$ and $w\in X^*$ one has 
\[
\eps_w(ff')=\sum_{w=uu'}\eps_u(f)\eps_{u'}(f'),
\]
where the sum is over all the (finitely many) decompositions of $w$ as a concatenation $uu'$ with $u,u'\in X^*$;
\item[(c)]
For every $w\in X^*$, the map $\rho_w$ is a profinite group homomorphism.
\end{enumerate}
\end{prop}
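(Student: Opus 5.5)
The plan is to prove (a)$\Leftrightarrow$(b) and (b)$\Leftrightarrow$(c), in each case by comparing coefficients.

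For (a)$\Leftrightarrow$(b), compute the product $\Lam(f)\Lam(f')$ in $K\langle\langle X\rangle\rangle$. Multiplication of monomials is concatenation, so
\[
\Lam(f)\Lam(f')=\sum_{w\in X^*}\Bigl(\sum_{w=uu'}\eps_u(f)\eps_{u'}(f')\Bigr)w .
\]
Each coefficient is a finite sum, because $w$ has only $|w|+1$ decompositions. Hence the identity $\Lam(ff')=\Lam(f)\Lam(f')$ holds exactly when the coefficients of every monomial $w$ agree, which is (b). Since $\Lam$ is continuous by hypothesis, being an abstract homomorphism is the same as being a profinite group homomorphism. Both $F$ and $K\langle\langle X\rangle\rangle^{\times,1}$ are groups, so multiplicativity already forces $\Lam(1)=1$ and compatibility with inverses.

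For (b)$\Rightarrow$(c), fix $w=(a_1\cdots a_n)$ and compute the $(i,j)$-entry of $\rho_w(f)\rho_w(f')$ for $i\le j$. Both matrices are upper unitriangular, so this entry is
\[
\sum_{k=i}^{j}\eps_{(a_i\cdots a_{k-1})}(f)\,\eps_{(a_k\cdots a_{j-1})}(f').
\]
The decompositions $uu'$ of the subword $(a_i\cdots a_{j-1})$ are in bijection with the indices $k$ with $i\le k\le j$. By (b) the sum therefore equals $\eps_{(a_i\cdots a_{j-1})}(ff')$, which is the $(i,j)$-entry of $\rho_w(ff')$. We also note that $\rho_w$ lands in $\dbU_n(K)$, since its diagonal entries are $\eps_1=1$. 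It is continuous, since each $\eps_u$ is continuous, being the composition of $\Lam$ with a coordinate projection of the profinite ring $K\langle\langle X\rangle\rangle$.

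For (c)$\Rightarrow$(b), take $w$ of length $n$ and read off the $(1,n+1)$-entry of the identity $\rho_w(ff')=\rho_w(f)\rho_w(f')$. This gives exactly $\eps_w(ff')=\sum_{k=1}^{n+1}\eps_{(a_1\cdots a_{k-1})}(f)\eps_{(a_k\cdots a_n)}(f')$, which is (b) for $w$. The case $w=1$ is trivial: $\eps_1=1$, and the only decomposition is $1\cdot1$.

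No step is a genuine obstacle. The only point needing care is the index bookkeeping, namely matching the decompositions of the subword $(a_i\cdots a_{j-1})$ with the middle index $k$ of the matrix product, including the boundary terms $k=i$ and $k=j$, where $u$ or $u'$ is the empty word.
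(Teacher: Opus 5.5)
Your proof is correct: (a)$\Leftrightarrow$(b) is the coefficientwise form of $\Lambda(ff')=\Lambda(f)\Lambda(f')$, and (b)$\Leftrightarrow$(c) follows by matching the middle index $k$ of the matrix product with the split points of $(a_i\cdots a_{j-1})$; for the converse the $(1,n+1)$-entry alone suffices. The paper gives no argument of its own here and simply cites Proposition 4.1 of \cite{Efrat24}; your direct comparison of coefficients is the natural self-contained proof of that cited result.
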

\begin{proof}
See \cite{Efrat24}*{Prop.\ 4.1}.
\end{proof}

In particular, (b) implies that $\eps_{(x)}\colon F\to K^+$ is a profinite group homomorphism for every $x\in X$.

We call a map $\Lam$ satisfying the equivalent conditions of Proposition \ref{Magnus equivalent conditions} a \textsl{generalized Magnus homomorphism} and then call $\rho_w$ the \textsl{generalized Magnus representation} associated with $w$.

\begin{exam}
\label{standard Magnus homomorphism}
\rm
Take $K=\dbF_p$ with $p$ prime, and let $F$ be the free pro-$p$ group on the basis $X=\{x_1\nek x_d\}$.
The continuous homomorphism $\Lam\colon F\to \dbF_p\langle\langle X\rangle\rangle^{\times,1}$ defined on the basis elements by $\Lam(x_i)=1+x_i$ will be called the \textsl{standard pro-$p$ Magnus homomorphism}.
This follows a similar classical construction due to Magnus for discrete free groups; See \cite{Efrat14}*{\S5} for the pro-$p$ case.
As shown by Lazard, $\Lam$ is injective \cite{Serre97}*{I, \S6}.

We note that here the homomorphisms $\eps_{(x_j)}$ are dual to $X$, in the sense that $\eps_{(x_j)}(x_i)$ is $1$ if $i=j$, and is $0$ otherwise.
\end{exam}

\section{Unitriangular cohomology elements}
\label{section on unitriangular cohomology elements}
\subsection{Cohomology and minimal presentations}
Let $p$ be a fixed prime number.
Given closed subgroups $K,K'$ of a pro-$p$ group $G$ we write $K^p$, resp., $[K,K']$, for the closed subgroups generated by all elements $k^p$, resp., $[k,k']$, with $k\in K$ and $k'\in K'$. 
As before, we abbreviate $H^i(G)=H^i(G,\dbF_p)$ and recall that $H^1(G)=\Hom_{\rm cont}(G,\dbF_p)$.

Now assume that $G$ is finitely presented with a fixed minimal presentation 
\begin{equation}
\label{minimal presentation}
1\to R\to F\to G\to 1.
\end{equation}
%   \cite{EfratMinac11}*{Cor.\ 2.2}.
Thus $F$ is a free pro-$p$ group on a nonempty finite basis $X$ and $R$ is the closed normal subgroup of $F$ generated by finitely many elements $r_1\nek r_m$, and the minimality means that $R$ is contained in the Frattini subgroup $F^p[F,F]$.
We denote the coset of $r\in R$ in the $\dbF_p$-linear space $H_2(G,\dbF_p)=R/R^p[F,R]$ by $\bar r$.
We identify the free basis $X$ with the alphabet and the set of non-commuting variables discussed in the previous sections, and use the interplay between these three roles of $X$.

The inflation map $\mathrm{inf}\colon H^1(G)\to H^1(F)$ is an isomorphism, by the minimality of the presentation, and $H^2(F)=0$ \cite{NeukirchSchmidtWingberg}*{Cor.\ 3.9.5}.
It follows from the five term exact sequence in profinite cohomology \cite{NeukirchSchmidtWingberg}*{Prop.\ 1.6.7} that the transgression map $\trg\colon H^1(R)^G\to H^2(G)$ is an isomorphism.
One has a well defined perfect bilinear map
\begin{equation}
\label{transgression pairing}
(\cdot,\cdot)\colon R/R^p[F,R]\times H^2(G)\to\dbF_p, \quad (\bar r,\alp)=(\trg\inv(\alp))(r)
\end{equation}
which is functorial in $R$ in the natural sense \cite{CheboluEfratMinac12}*{\S5}.
We call it the \textsl{transgression pairing}.

\subsection{Unitriangular cohomology elements}
Now let $n\geq1$ be an integer and take in the previous section the ring $K=\dbF_p$.
We abbreviate 
\[
\dbU_n:=\dbU_n(\dbF_p), \quad Z_n:=Z_n(\dbF_p) ,\quad \bar\dbU_n:=\bar\dbU_n(\dbF_p).
\]

We say that a pro-$p$ group homomorphism $\rho\colon F\to \dbU_n$ is \textsl{compatible with the presentation (\ref{minimal presentation})}, if $\rho(R)$ is contained in the subgroup $Z_n(\dbF_p)=\dbZ/p$ of $\dbU_n$.
Then $\rho$ induces a pro-$p$ group homomorphism $\bar\rho\colon G\to\bar\dbU_n$.

\begin{exam}
\label{compatibility for n=2}
\rm
When $n=2$, $\rho$ is always compatible with the presentation.
Indeed, as $R\subseteq F^p[F,F]$ one has
$\rho(R)\subseteq \dbU_2^p[\dbU_2,\dbU_2]=Z_2$.
\end{exam}

Let $\gam_n\in H^2(\bar\dbU_n)$ correspond to the extension (\ref{group extension gamma}) via the Schreier correspondence \cite{NeukirchSchmidtWingberg}*{Th.\ 1.2.4}. 
Every pro-$p$ group homomorphism $\bar\rho\colon G\to\bar\dbU_n$
induces a pullback homomorphism $\bar\rho^*\colon H^2(\bar\dbU_n)\to H^2(G)$.
We call 
\[
\bar\rho^*(\gam_n)\in H^2(G)
\]
the \textsl{unitriangular cohomology element} associated with $\bar\rho$.
This construction is related to the transgression pairing by means of the following key fact:

\begin{prop}
\label{transgression pairing and unitriangular elements}
Consider a pro-$p$ group homomorphism $\rho\colon F\to\dbU_n$ which is compatible with the presentation (\ref{minimal presentation}) and the induced homomorphism $\bar\rho\colon G\to\bar\dbU_n$.
Under the identification $Z_n=\dbF_p$, for every $r\in R$ one has 
\[
\rho(r)=(\bar r,\bar\rho^*(\gam_n)).
\]
\end{prop}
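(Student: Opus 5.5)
The plan is to compute $\trg\inv(\bar\rho^*(\gam_n))$ explicitly as a homomorphism $R\to\dbF_p$ and show that it equals the restriction $\rho|_R\colon R\to Z_n=\dbF_p$. Since the pairing (\ref{transgression pairing}) is defined by $(\bar r,\alp)=(\trg\inv(\alp))(r)$, the claim becomes the identity
\[
\trg(\rho|_R)=\bar\rho^*(\gam_n).
\]

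The first step is to check that $\rho|_R$ is a legitimate element of $H^1(R)^G$. Because $\rho$ is compatible with the presentation, $\rho(R)\subseteq Z_n$, so $\rho|_R\colon R\to Z_n\isom\dbF_p$ is a continuous homomorphism. It is $F$-invariant, hence $G$-invariant, because $Z_n$ is central in $\dbU_n$: for $f\in F$ and $r\in R$ we have $\rho(frf\inv)=\rho(f)\rho(r)\rho(f)\inv=\rho(r)$.

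The second step is to compute the transgression via the standard cochain description (as in \cite{NeukirchSchmidtWingberg}*{Prop.\ 1.6.6}). Choose a continuous section $s\colon G\to F$ with $s(1)=1$. For $\chi\in H^1(R)^G$, extend $\chi$ to a continuous cochain $\tilde\chi\colon F\to\dbF_p$ such that $d\tilde\chi$ is inflated from a $2$-cocycle on $G$. Then $\trg(\chi)$ is represented by $(g,h)\mapsto\tilde\chi(s(g)s(h)s(gh)\inv)$ (up to a universal sign convention). Applying this with $\chi=\rho|_R$, we get the cocycle
\[
c(g,h)=\rho\bigl(s(g)s(h)s(gh)\inv\bigr)\in Z_n,
\]
where $s(g)s(h)s(gh)\inv\in R$.

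The third step computes the other side. The class $\gam_n$ is represented by the factor set of the extension (\ref{group extension gamma}) with respect to a section $\sigma\colon\bar\dbU_n\to\dbU_n$, namely $(a,b)\mapsto\sigma(a)\sigma(b)\sigma(ab)\inv$. Pulling back along $\bar\rho$ gives $\bar\rho^*(\gam_n)$, represented by $(g,h)\mapsto\sigma(\bar\rho g)\sigma(\bar\rho h)\sigma(\bar\rho(gh))\inv$. Now $\rho\circ s\colon G\to\dbU_n$ is also a lift of $\bar\rho$, since $\bar\rho$ is induced from $\rho$. Replacing $\sigma\circ\bar\rho$ by $\rho\circ s$ changes the cocycle only by a coboundary, because the two lifts differ by a $Z_n$-valued $1$-cochain and $Z_n$ is central. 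With the lift $\rho\circ s$, the cocycle is exactly
\[
\rho(s(g))\rho(s(h))\rho(s(gh))\inv=\rho\bigl(s(g)s(h)s(gh)\inv\bigr)=c(g,h),
\]
so the two classes agree.

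The main obstacle is bookkeeping rather than substance. One must make sure the sign conventions in the Schreier correspondence and in the definition of $\trg$ match (they do with the standard conventions of \cite{NeukirchSchmidtWingberg}). A cleaner way to see the identity, which avoids choosing an explicit cochain extension, is functoriality of transgression. The pair $(\rho,\bar\rho)$ maps the extension $1\to R\to F\to G\to1$ to (\ref{group extension gamma}), and the transgression for (\ref{group extension gamma}) sends $\id_{Z_n}$ to $\gam_n$, which is the standard fact that the transgression of a central extension is its class. Naturality of transgression under this map of extensions then gives $\trg(\rho|_R)=\trg(\rho^*\id_{Z_n})=\bar\rho^*(\trg(\id_{Z_n}))=\bar\rho^*(\gam_n)$. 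I would present this functorial argument and cite the standard facts; this is presumably what \cite{CheboluEfratMinac12} or \cite{Efrat24} contain.
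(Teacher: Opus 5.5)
Your proposal is correct. The functorial argument you say you would present is essentially the paper's proof. The difference is packaging: the paper does not apply naturality directly to the morphism of extensions $(\rho|_R,\rho,\bar\rho)$.

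The paper's route runs through the image of $\rho$. It lets $R'$ be the kernel of the induced map $F\to\bar\dbU_n$, lets $\bar\dbU'\isom F/R'$ be its image, and lets $\dbU'$ be the preimage of $\bar\dbU'$ in $\dbU_n$. Hoechsmann's lemma is then applied to $\rho$, viewed as a map from $1\to R'\to F\to\bar\dbU'\to1$ onto $0\to Z_n\to\dbU'\to\bar\dbU'\to1$. This map is the identity on the common quotient, and the lemma gives $\trg(\rho|_{R'})=\gam'=\res(\gam_n)$. Comparing transgressions along $R\le R'$ (restriction on $H^1$, inflation $H^2(F/R')\to H^2(F/R)$) then yields $\trg(\rho|_R)=\bar\rho^*(\gam_n)$.

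The detour through the image is what lets the paper quote Hoechsmann's statement for maps over a fixed quotient. Your one-step version replaces it with two standard facts: naturality of transgression under arbitrary morphisms of extensions, and $\trg(\id_{Z_n})=\gam_n$.

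Your cochain computation in the second and third steps is a genuinely more elementary route. It proves the Hoechsmann-type identity from scratch: the lift $\rho\circ s$ of $\bar\rho$ produces exactly the cocycle $\rho(s(g)s(h)s(gh)\inv)$. It also makes the sign conventions visible, which the citation-based proof leaves implicit. Any global sign would be harmless for the later uses of $\eps_w(r)=(\bar r,\alp_w)$, since these only involve vanishing, row equivalence and ranks.
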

\begin{proof}
Let $R'$ (resp., $\bar\dbU'$),  be the kernel (resp., image) of the induced homomorphism $\rho'\colon F\to\bar\dbU_n$.
We identify $F/R'=\bar\dbU'$.
By the compatibility, $R\leq R'$.
Let $\dbU'$ be the preimage of $\bar\dbU'$ in $\dbU_n$, and let $\gam'\in H^2(\bar\dbU')$ correspond to the extension
\[
0\to\dbZ/p\to\dbU'\to\bar\dbU'\to1.
\]
Then $\gam'=\res(\gam_n)$ under the restriction map $\res\colon H^2(\bar\dbU_n)\to H^2(\bar\dbU')$.

Using the functoriality of the transgression \cite{EfratMinac11}*{\S2B)} we obtain a commutative diagram
\[
\xymatrix{
H^1(R')^F\ar[d]_{\res}\ar[r]^{\trg}&H^2(F/R')\ar[d]^{\inf}&H^2(\bar\dbU')\ar@{=}[l]\ar[d]
  &H^2(\bar\dbU_n)\ar[l]_{\res}\ar[ld]^{\bar\rho^*}\\
H^1(R)^F\ar[r]^{\trg}&H^2(F/R)\ar@{=}[r]&H^2(G).
}
\]
By \cite{Hoechsmann68}*{2.1}, the upper transgression homomorphism maps $\rho|_{R'}$ to $\gam'$.
Hence the lower transgression homomorphism maps $\rho|_R$ to $\bar\rho^*(\gam_n)$.
The assertion now follows from the definition of the transgression pairing.
\end{proof}

\subsection{Massey products}
\label{subsection on Massey products}
Massey products are defined in the general homological context of differential graded algebras.
Here we will be interested only in the profinite cohomology context, and moreover restrict to products into the second cohomology group.
In this case there is an equivalent definition of Massey products, which is due to Dwyer \cite{Dwyer75} in the discrete setting, using unitriangular cohomology elements.
See e.g., \cite{Efrat14} for the profinite setting.

Specifically, take $n\geq2$ and let $\rho_1\nek\rho_n$ be elements of $H^1(G)$.
Let $\bar\rho$ range over all homomorphisms $G\to\bar\dbU_n$ such that $\rho_i=\pr_{i,i+1}\circ\bar\rho$, $i=1,2\nek n$.
The \textsl{$n$-fold Massey product} $\langle\rho_1\nek\rho_n\rangle$ is the (possibly empty) set of all unitriangular cohomology elements $\bar\rho^*(\gam_n)$ for such $\bar\rho$  \cite{Efrat17}*{Example 7.4(2)}.
Thus the $n$-fold Massey product is a multi-valued map
\[
\langle \cdot\nek\cdot\rangle\colon H^1(G)^n\to H^2(G).
\]

When $n=2$ the Massey product is essentially the cup product, namely, $\langle\rho_1,\rho_2\rangle=\{\rho_1\cup\rho_2\}$ (up to a sign convention).
Thus 
\[
\bar\rho^*(\gam_2)=(\pr_{12}\circ\bar\rho)\cup(\pr_{23}\circ\bar\rho),
\]

\subsection{Magnus representations for words}
\label{subsection on Magnus respresentations for words}
Consider a generalized Magnus homomorphism $\Lam\colon F\to \dbF_p\langle\langle X\rangle\rangle^{\times,1}$
with coefficient maps $\eps_w\colon F\to \dbF_p$ for $w\in X^*$.
Given a word $w=(a_1\cdots a_n)\in X^*$ of length $n\geq1$, let 
\[
\rho_w\colon F\to\dbU_n
\]
be the Magnus representation associated with $\Lam$ and $w$ (See \S\ref{subsection on unitriangular matrices}).
When the homomorphism $\rho_w$ is compatible with the presentation (\ref{minimal presentation}) we simply say that \textsl{the word $w$ is compatible with the presentation}.

\begin{lem}
\label{compatible homomorphisms give homomorphic Magnus coef}
Suppose that $w\in X^*$ is compatible with the presentation (\ref{minimal presentation}).
Then:
\begin{enumerate}
\item[(a)]
$\eps_u(r)=0$ for every $r\in R$ and every proper middle factor $u$ of $w$;
\item[(b)]
The restricted map $\eps_w\colon R\to \dbF_p$ is a group homomorphism.
\end{enumerate}
\end{lem}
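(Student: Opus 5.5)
Both parts come from reading off the entries of $\rho_w(r)$ for $r\in R$. Write $w=(a_1\cdots a_n)$. By definition the $(i,j)$-entry of $\rho_w(f)$ is $\eps_{(a_i\cdots a_{j-1})}(f)$ for $1\leq i<j\leq n+1$. By Proposition \ref{Magnus equivalent conditions}, $\rho_w$ is a group homomorphism. Compatibility with the presentation (\ref{minimal presentation}) means $\rho_w(R)\subseteq Z_n$. So for $r\in R$, every off-diagonal entry of $\rho_w(r)$ other than the $(1,n+1)$-entry vanishes.

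For (a), let $u$ be a proper middle factor of $w$, meaning $u=(a_i\cdots a_{j-1})$ with $1\leq i<j\leq n+1$ and $(i,j)\neq(1,n+1)$. Then $\eps_u(r)$ is exactly the $(i,j)$-entry of $\rho_w(r)$, which is $0$. There is one subtlety. A word $u$ may occur as a middle factor of $w$ in several positions, and the convention for the empty word is $\eps_1=1$. I therefore read ``proper middle factor'' as a nonempty factor $u\neq w$. Any such $u$ occurs at some position $(i,j)\neq(1,n+1)$, since $u\neq w$ forces $j-i<n$. That single occurrence already gives $\eps_u(r)=0$.

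For (b), the map $\rho_w|_R\colon R\to Z_n$ is a group homomorphism, being the restriction of a homomorphism to a subgroup landing in $Z_n$. Composing with the isomorphism $\pr_{1,n+1}\colon Z_n\to\dbF_p$ from \S\ref{subsection on unitriangular matrices} yields $r\mapsto\eps_w(r)$, which is therefore additive on $R$. Alternatively, one can argue directly from Proposition \ref{Magnus equivalent conditions}(b): $\eps_w(rr')=\sum_{w=vv'}\eps_v(r)\eps_{v'}(r')$. All cross terms with $v,v'$ both nonempty involve a proper prefix $v$, which is a proper middle factor, so $\eps_v(r)=0$ by (a). Only the terms $\eps_w(r)+\eps_w(r')$ survive.

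No step here is a real obstacle. The only care needed is the bookkeeping of which entries of the matrix correspond to proper middle factors, including the empty-word convention.
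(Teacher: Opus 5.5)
Your proof is correct and matches the paper's: (a) is read off from the vanishing entries of $\rho_w(r)\in Z_n$, and your alternative argument for (b), the product formula of Proposition~\ref{Magnus equivalent conditions}(b) combined with (a), is exactly the paper's proof. Your primary argument for (b), composing $\rho_w|_R\colon R\to Z_n$ with the isomorphism $\pr_{1,n+1}$, is an equally valid and slightly more direct variant.
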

\begin{proof}
(a) follows immediately from the definition of $\rho_w$.
(b) follows from (a) and from condition (b) of Proposition \ref{Magnus equivalent conditions}.
\end{proof}

Under the compatibility assumption, $\rho_w$ induces as above a pro-$p$ group homomorphism $\bar\rho_w\colon G\to\bar\dbU_n$, which gives rise to a pullback homomorphism 
\[
\bar\rho_w^*\colon H^2(\bar\dbU_n)\to H^2(G).
\]
We define the \textsl{unitriangular cohomology element of $w$} to be 
\[
\alp_w=\bar\rho_w^*(\gam_n)\in H^2(G).
\]
For $1\leq i<n$ the projection of $\rho_w$ on the $(i,i+1)$ entry is $\eps_{(a_i)}$, so
\begin{equation}
\label{Massey elements and Magnus coefficients}
\alp_w\in\langle \eps_{(a_1)}\nek \eps_{(a_n)}\rangle.
\end{equation}
By Proposition \ref{transgression pairing and unitriangular elements}, for every $r\in R$ we have
\begin{equation}
\label{Magnus coefficients as transgression pairings}
\eps_w(r)=(\bar r,\alp_w)\in\dbF_p.
\end{equation}

\section{Coefficient matrices}
\label{section on coefficient matrices}
\subsection{Preliminaries}
Consider a minimal finite presentation $G=F/R$ as in (\ref{minimal presentation}), with $F$ a free pro-$p$ group on the finite basis $X$ with $d$ elements.
Then $d=\dim_{\dbF_p}H^1(G)<\infty$ and $m=\dim_{\dbF_p}H^2(G)<\infty$.
We assume that $m\geq1$, i.e., $G$ is not a free pro-$p$ group \cite{NeukirchSchmidtWingberg}*{Cor.\ 3.9.5}.
We write the $\dbF_p$-linear space $H_2(G,\dbF_p)=R/R^p[F,R]$ additively.
As before, the coset of $r\in R$ in this space will be denoted by $\bar r$, so $\overline{rr'}=\overline r+\overline{r'}$ for $r,r'\in R$.

We recall from \cite{NeukirchSchmidtWingberg}*{Ch.\ III, Cor.\ 3.9.3} the following variant of the ``Frattini argument" for closed normal subgroups of a pro-$p$ group:

\begin{lem}
Elements $r_1\nek r_m$ of $R$ generate it as a closed normal subgroup of $F$ if and only if $\bar r_1\nek\bar r_m$ span $R/R^p[F,R]$.
\end{lem}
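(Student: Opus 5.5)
We prove the two implications separately. Throughout, $V=R/R^p[F,R]$, written additively as in the preceding paragraph. We use three standard facts about the free pro-$p$ group $F$ acting on its closed normal subgroup $R$: $R$ is a pro-$p$ group, the action of $F$ by conjugation is continuous, and $R^p[F,R]$ is a closed normal subgroup of $F$ contained in $R$. Note that $[F,R]\supseteq[R,R]$, so $V$ is an abelian group of exponent $p$, i.e.\ an $\dbF_p$-linear space.

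The direction ``$\Rightarrow$'' is immediate. Suppose $r_1\nek r_m$ generate $R$ as a closed normal subgroup of $F$. Then $R$ is the closure of the subgroup generated by all conjugates $r_i^f$ with $f\in F$. Modulo $[F,R]$ we have $r_i^f\equiv r_i$, so each such conjugate maps to $\bar r_i$ in $V$. Since the projection $R\to V$ is continuous and the span of $\bar r_1\nek\bar r_m$ is finite, hence closed, the image of $R$ lies in this span. Thus $\bar r_1\nek\bar r_m$ span $V$.

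For ``$\Leftarrow$'', let $N$ be the closed normal subgroup of $F$ generated by $r_1\nek r_m$. Then $N\le R$, $N$ is normal in $F$, and the spanning hypothesis says exactly that
\[
N\,R^p[F,R]=R.
\]
We must show $N=R$. Pass to the pro-$p$ group $\bar R=R/N$, on which $F$ acts by conjugation, or equivalently consider the pro-$p$ group $F/N$ and its closed normal subgroup $\bar R$. The displayed equation becomes
\[
\bar R=\bar R^p[F/N,\bar R].
\]
The key step, and the main obstacle, is to deduce from this that $\bar R=1$. This is a Nakayama-type statement: a nontrivial closed normal subgroup $M$ of a pro-$p$ group $\Gamma$ satisfies $M^p[\Gamma,M]\subsetneq M$.

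We plan to prove the Nakayama-type statement by reducing to finite $p$-groups. Assume $M\ne1$ and choose an open normal subgroup $U$ of $\Gamma$ with $M\not\subseteq U$. Then $MU/U$ is a nontrivial normal subgroup of the finite $p$-group $\Gamma/U$. In a finite $p$-group $P$, the lower $p$-central series of $P$ relative to a normal subgroup $M_0$, namely $M_0\supseteq M_0^p[P,M_0]\supseteq\cdots$, reaches $1$. This holds because $P$ acts on the successive quotients as a $p$-group, so every nontrivial normal subgroup meets the center and the filtration strictly decreases. In particular, for a nontrivial normal $M_0$ we get $M_0^p[P,M_0]\ne M_0$.

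Now take images in $\Gamma/U$. The image of $M^p[\Gamma,M]$ is $(MU/U)^p[\Gamma/U,MU/U]$, which is a proper subgroup of $MU/U$. Hence $M^p[\Gamma,M]\ne M$. Applying this with $\Gamma=F/N$ and $M=\bar R$ gives $\bar R=1$, that is, $N=R$.

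Alternatively, one could cite the Frattini argument for pro-$p$ groups, $\Phi(R)=R^p[R,R]$, applied to the group $R\rtimes$-action. The direct finite-quotient argument above avoids needing that reference.
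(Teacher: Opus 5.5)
Your proof is correct. The paper gives no argument of its own for this lemma; it quotes it from Neukirch--Schmidt--Wingberg (Ch.~III, Cor.~3.9.3). The standard proof there is by duality. One identifies $\Hom(R/R^p[F,R],\dbF_p)$ with $H^1(R)^F$. If the closed normal subgroup $N$ generated by the $r_i$ were properly contained in $R$, then $H^1(R/N)$ would be a nonzero discrete $p$-primary $F$-module. It would therefore contain a nonzero $F$-fixed element, i.e., a nonzero functional on $R/R^p[F,R]$ vanishing on all $\bar r_i$, contradicting the spanning hypothesis.

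Your route is the group-theoretic counterpart of this. Instead of fixed points on the dual, you show directly that the coinvariants are nonzero, via the Nakayama-type statement $M^p[\Gamma,M]\neq M$, reduced by compactness to a finite $p$-quotient. The cohomological version fits the framework the paper actually uses afterwards (the transgression pairing with $H^2(G)$). Yours is self-contained: it needs only elementary finite $p$-group theory, plus the fact that a continuous surjection of compact groups maps the closed subgroup generated by a set onto the closed subgroup generated by its image. You use that fact correctly, both for the image of $R^p[F,R]$ in $F/N$ and in $\Gamma/U$.

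Two small remarks. First, the sentence justifying the finite case is too compressed: ``every nontrivial normal subgroup meets the center'' does not by itself give $M_0^p[P,M_0]\neq M_0$. The clean way to use it is to take a maximal proper subgroup $K$ of $M_0$ that is normal in $P$. Then $M_0/K$ is a minimal normal subgroup of the $p$-group $P/K$, hence central of order $p$, so $M_0^p[P,M_0]\subseteq K\subsetneq M_0$. Second, drop the final ``alternative'' paragraph. As written it is garbled, and the Frattini argument $\Phi(R)=R^p[R,R]$ concerns generation of $R$ as a pro-$p$ group, not as a closed normal subgroup of $F$, so it cannot replace the relative statement.
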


\begin{cor}
\label{Rp[F,R]}
The following conditions on $r_1\nek r_m\in R$ are equivalent:
\begin{enumerate}
\item[(a)]
$r_1\nek r_m$ is a minimal system of generators for $R$ as a closed normal subgroup of $F$;
\item[(b)]
The costes $\bar r_1\nek \bar r_m$ form an $\dbF_p$-linear basis of $R/R^p[F,R]$.
\end{enumerate}
\end{cor}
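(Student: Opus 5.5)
The plan is to derive Corollary \ref{Rp[F,R]} directly from the preceding lemma, using that $R/R^p[F,R]$ is a finite-dimensional $\dbF_p$-linear space. Here we use that the coset map $r\mapsto\bar r$ is a homomorphism onto $R/R^p[F,R]$ and that this quotient is an elementary abelian $p$-group. Its dimension is $m=\dim H^2(G)<\infty$, via the perfect transgression pairing (\ref{transgression pairing}). The one point to respect is that ``minimal'' in (a) must be read as the paper reads it throughout: no proper subfamily of $r_1\nek r_m$ generates $R$ as a closed normal subgroup of $F$. This is the only thing to be careful about; we do not need minimal cardinality as a separate hypothesis, because it will come out of the argument.

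For (a)$\Rightarrow$(b), suppose $r_1\nek r_m$ generate $R$ as a closed normal subgroup and no proper subfamily does. By the lemma, $\bar r_1\nek\bar r_m$ span $R/R^p[F,R]$. If they were linearly dependent, some $\bar r_i$ would lie in the span of the others. Then the family with $r_i$ omitted would still span $R/R^p[F,R]$. By the lemma again, that proper subfamily would generate $R$ as a closed normal subgroup, contradicting minimality. Hence the cosets are linearly independent, and so form a basis.

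For (b)$\Rightarrow$(a), suppose the cosets form a basis. They in particular span, so by the lemma $r_1\nek r_m$ generate $R$ as a closed normal subgroup. Suppose a proper subfamily also generated $R$. Then by the lemma (the ``only if'' direction) its cosets would span $R/R^p[F,R]$. That gives a spanning set with fewer than $m=\dim R/R^p[F,R]$ elements, which is impossible. So the system is minimal.

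As a byproduct, every minimal generating system has exactly $\dim_{\dbF_p}R/R^p[F,R]$ elements. So minimality in the inclusion sense and in the cardinality sense coincide, which justifies identifying $m$ with $\dim H^2(G)$ as done above.
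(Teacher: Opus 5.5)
Your argument is correct and takes the same approach as the paper. The paper states this corollary without proof, as an immediate consequence of the preceding Frattini-type lemma, and your linear-algebra deduction from that lemma is exactly the routine verification being left to the reader: spanning comes from the lemma, independence from minimality, and conversely no proper subfamily of a basis can span. Your closing remark that inclusion-minimality and minimal cardinality coincide is also correct and worth making, since the introduction uses ``minimal'' in the cardinality sense.
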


\subsection{Elementary operations}
Fix a generalized Magnus homomorphism $\Lam\colon F\to\dbF_p\langle\langle X\rangle\rangle$ with coefficient maps $\eps_w\colon F\to\dbF_p$, $w\in X^*$.
Let $B$ be a nonempty totally ordered set of nonempty words $w$ in $X^*$ which are compatible with the presentation (\ref{minimal presentation}).
By Lemma \ref{compatible homomorphisms give homomorphic Magnus coef}(b), the restriction $\eps_w\colon R\to\dbF_p$ is a group homomorphism.
Hence for every $r,r'\in R$ and $1\leq k<p$ we have
\begin{equation}
\label{elementary operations for epsilon}
\eps_w(r^k)=k\eps_w(r), \quad \eps_w(r(r')^k)=\eps_w(r)+k\eps_w(r').
\end{equation}

Given a minimal system of generators $\tilde r=(r_1\nek r_m)$ for $R$ as a closed normal subgroup of $F$ we define an $m\times|B|$-matrix over $\dbF_p$ (possibly with infinitely many columns)
\[
M(\tilde r,B)=[\eps_w(r_j)]_{1\leq j\leq m,w\in B}.
\]

On the set of all systems $\tilde r$ as above we have the following three types of \textsl{elementary operations}:
\begin{enumerate}
\item[(i)]
Permutations of $r_1\nek r_m$;
\item[(ii)]
Replacing $r_s$ by $r_s^k$ for some $1\leq s\leq m$ and some $1\leq k<p$;
\item[(iii)]
Replacing $r_s$ by $r_sr_t^k$ for some $s\neq t$ and $1\leq k<p$.
\end{enumerate}
For the induced $m$-tuples $(\bar r_1\nek\bar r_m)$ in $R/R^p[F,R]$, the elementary operations (i)--(iii) result in the operations of permutations of $(\bar r_1\nek\bar r_m)$, multiplication of $\bar r_s$ by the nonzero scalar $k$, and the replacement of $\bar r_s$ by $\bar r_s+k\bar r_t$ for $s\neq t$, respectively. 
 
Similarly, it follows from (\ref{elementary operations for epsilon}) that the elementary operations (i)--(iii) induce the elementary row operations on the matrix $M(\tilde r,B)$ of permutations of the rows, multiplication of row $s$ by the scalar $k$, and adding to the $s$-th row $k$ times the $t$-th row.

\subsection{Unitriangular generators for $H^2(G)$}
\begin{thm}
\label{conditions on the matrix M}
Consider a presentation $G=F/R$ and a basis $X$ for $F$ as above, and let $B$ be a nonempty totally ordered set of nonempty words in $X^*$ which are compatible with the presentation.
The following conditions are equivalent:
\begin{enumerate}
\item[(a)]
There exists a minimal generating system $\tilde r=(r_1\nek r_m)$ for $R$ as a closed normal subgroup of $F$ such that $M(\tilde r,B)$ is in row echelon form with no zero rows.
\item[(b)]
For every minimal generating system $\tilde r=(r_1\nek r_m)$ for $R$ as a closed normal subgroup of $F$ the matrix $M(\tilde r,B)$ has rank $m$.
\item[(c)]
The cohomology elements $\alp_w$, $w\in B$, generate $H^2(G)$.
\end{enumerate}
\end{thm}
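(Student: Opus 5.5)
The plan is to reduce everything to linear algebra over $\dbF_p$ via the transgression pairing (\ref{transgression pairing}) and formula (\ref{Magnus coefficients as transgression pairings}). Fix a minimal generating system $\tilde r=(r_1\nek r_m)$. By Corollary \ref{Rp[F,R]}, $\bar r_1\nek\bar r_m$ is a basis of $V=R/R^p[F,R]$, and $\dim V=m=\dim H^2(G)$. Since the pairing $V\times H^2(G)\to\dbF_p$ is perfect, it identifies $H^2(G)$ with the dual space $V^\vee$. By (\ref{Magnus coefficients as transgression pairings}), the column of $M(\tilde r,B)$ indexed by $w$ is $\bigl((\bar r_j,\alp_w)\bigr)_j$, which is exactly the coordinate vector of $\alp_w\in V^\vee$ with respect to the dual basis of $(\bar r_j)$. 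So the column space of $M(\tilde r,B)$ is the image of $\mathrm{span}\{\alp_w: w\in B\}$ under a linear isomorphism $H^2(G)\to\dbF_p^m$. It follows that $\mathrm{rank}\,M(\tilde r,B)=\dim\mathrm{span}\{\alp_w\}$ for every $\tilde r$. This gives (b)$\Leftrightarrow$(c) at once: rank $m$ means the $\alp_w$ span an $m$-dimensional subspace, i.e.\ all of $H^2(G)$. Although $B$ may be infinite, rank is still the dimension of the finite-dimensional column space, so there is no issue here.

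For (a)$\Rightarrow$(b), a row echelon matrix with no zero rows has $m$ pivots and hence rank $m$. By the computation above the rank does not depend on $\tilde r$, so it equals $m$ for every minimal generating system.

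For (b)$\Rightarrow$(a), start with any minimal system $\tilde r$. Run Gaussian elimination on the rows of $M(\tilde r,B)$, scanning the columns in the given total order of $B$: at each stage take the first column (in the order of $B$) that has a nonzero entry among the remaining rows, move such a row up, scale it to a pivot, and clear the entries below it. Each row operation is realized by an elementary operation (i)--(iii) on $\tilde r$, as noted after (\ref{elementary operations for epsilon}). Each such operation keeps $\tilde r$ a minimal generating system, because it induces an invertible change of basis of $V$ (Corollary \ref{Rp[F,R]}). Since the rank is $m$, no zero row remains at the end.

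The main subtlety is that $B$ is only assumed totally ordered and possibly infinite, so the order need not be a well-order. Then "the first column with a nonzero entry" may fail to exist, and echelon form is not obviously meaningful. I would handle this by first choosing $m$ columns $w_1<\cdots<w_m$ in $B$ whose columns are linearly independent; these exist by the rank condition. Reducing the $m\times m$ submatrix to the identity by row operations makes the pivots occur at $w_1\nek w_m$. To obtain a genuine echelon shape, where everything to the left of a row's pivot is zero, I would instead choose the pivots greedily. Among the finitely many row vectors, each nonzero row has a nonzero support in $B$. Using the finiteness of the row space over $\dbF_p$, pick a vector in the row space whose leading nonzero position is minimal. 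The leading position exists because only finitely many distinct vectors, and so finitely many relevant "first differences", are involved; this point is the one I expect to need care and may have to be argued via the finite-dimensional quotient. Then induct on $m$ by working in the complementary subspace of row vectors vanishing at that position.
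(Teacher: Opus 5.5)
Your first three paragraphs are correct, and they organize the proof differently from the paper. The paper runs the cycle (a)$\Rightarrow$(b)$\Rightarrow$(c)$\Rightarrow$(a):
\begin{itemize}
\item (a)$\Rightarrow$(b): it transports $\tilde r$ to an arbitrary $\tilde r'$ through elementary operations and uses (\ref{Magnus coefficients as transgression pairings}) to see that $M(\tilde r,B)$ and $M(\tilde r',B)$ are row-equivalent;
\item (b)$\Rightarrow$(c): it completes any $\bar r\neq0$ to a basis and concludes that the span of the $\alp_w$ has trivial annihilator under the transgression pairing;
\item (c)$\Rightarrow$(a): it row-reduces, then uses the pairing to exclude zero rows.
\end{itemize}
You instead note once that, under $H^2(G)\cong (R/R^p[F,R])^\vee$, the columns of $M(\tilde r,B)$ are the coordinate vectors of the $\alp_w$ in the basis dual to $(\bar r_j)$. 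Hence $\mathrm{rank}\,M(\tilde r,B)=\dim\mathrm{span}\{\alp_w:w\in B\}$ for every $\tilde r$. This one identity gives (b)$\Leftrightarrow$(c) and the $\tilde r$-independence needed for (a)$\Rightarrow$(b) simultaneously, and it is cleaner than the paper's explicit row-equivalence. Your (b)$\Rightarrow$(a) is then the paper's Gaussian elimination, which is complete when $B$ is finite, or more generally well-ordered.

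Your last paragraph, however, rests on a false claim. Whether a nonzero row has a leading entry is a property of that row's own support in $B$. Finiteness of the row space says nothing about it: if $B$ is infinite and not well-ordered, a single row can have support with no least element. A different $\tilde r$ cannot help either. For $m=1$ every minimal system $(r')$ has $\bar r'=c\bar r$ with $c\neq0$, so by (\ref{Magnus coefficients as transgression pairings}) the row is only rescaled.

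This really happens. In the free pro-$p$ group on $x,y,z$, with $[a,b]=a^{-1}b^{-1}ab$, let $r=\prod_{n\geq1}[[x,y^{p^n}],z]$. Let $B=\{(xy^{p^n}z):n\geq1\}$, ordered by decreasing $n$.
\begin{itemize}
\item Every proper middle factor of $xy^{p^n}z$ omits $x$ or $z$, and $r$ becomes trivial when $x$ or $z$ is killed. Hence $\eps_u(r)=0$ for all such factors, and all these words are compatible.
\item A short computation gives $\eps_{(xy^{p^n}z)}(r)=1$; only the factor with index $n$ contributes.
\end{itemize}
Thus (b) and (c) hold, but for every choice of generator the single row of $M((r),B)$ has no leading entry.

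So do not try to push (b)$\Rightarrow$(a) beyond well-ordered $B$. The paper's own proof (``after applying a list of elementary operations \dots\ we may assume that $M(\tilde r,B)$ is in row echelon form'') tacitly assumes $B$ finite. That is also the only case used later, since in Theorem \ref{main theorem on mildness} one has $B\subseteq A$ with $A$ finite.
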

\begin{proof}
(a)$\Rightarrow$(b): \quad
Take $\tilde r$ to be a minimal system of generators such that $M(\tilde r,B)$ is in row echelon form with no zero rows, and let $\tilde r'$ be an arbitrary minimal system of generators.
By Corollary \ref{Rp[F,R]}, the induced $m$-tuples $\bar r$ and $\bar r'$ form bases of the $\dbF_p$-linear space $R/R^p[F,R]$.
Therefore there is a finite sequence of elementary operations which takes $\bar r$ to $\bar r'$.
They are induced by elementary operations of types (i)--(iii) which take $\tilde r$ to a system $\tilde r''$ with $\bar r''=\bar r'$.
The latter operations induce elementary row operations taking $M(\tilde r,B)$ to $M(\tilde r'',B)$.
By (\ref{Magnus coefficients as transgression pairings}), we have  $\eps_w(r''_j)=\eps_w(r'_j)$ for every $1\leq j\leq m$ and every  $w\in B$.
Hence $M(\tilde r'',B)=M(\tilde r',B)$, so $M(\tilde r,B)$ and $M(\tilde r',B)$ are row-equivalent, and consequently have the same rank $m$.

\medskip

(b)$\Rightarrow$(c): \quad
Since the transgression pairing is perfect, we need to show that the subgroup of $H^2(G)$ generated by $\alp_w$, $w\in B$, has a trivial annihilator under this pairing.
To this end, take $r\in R$ with $\bar r\neq0$.
We complete $\bar r$ into a linear basis $(\bar r=\bar r_1,\bar r_2\nek\bar r_m)$ of $R/R^p[F,R]$, with $r_2\nek r_m\in R$.
By Corollary \ref{Rp[F,R]}, $\tilde r=(r,r_2\nek r_m)$ is a minimal system of generators for $R$ as a closed normal subgroup of $F$.
By (b), $M(\tilde r,B)$ cannot have a zero row.
Thus there exists $w\in B$ with $\eps_w(r)\neq0$.
By (\ref{Magnus coefficients as transgression pairings}), this means that $(\bar r,\alp_w)\neq0$, so $\bar r$ is not in the annihilator, as desired.

\medskip

(c)$\Rightarrow$(a): \quad
Choose an arbitrary minimal system of generators $\tilde r=(r_1\nek r_m)$ for $R$ as a closed normal subgroup of $F$.
After applying a list of elementary operations of types (i)--(iii), we may assume that $M(\tilde r,B)$ is in row echelon form.

Now for every $1\leq j\leq m$, $\bar r_j\neq0$, by Corollary \ref{Rp[F,R]}.
Since the transgression pairing is perfect, (c) yields $w\in B$ such that 
$(\bar r_j,\alp_w)\neq0$.
By (\ref{Magnus coefficients as transgression pairings}), $\eps_w(r_j)\neq0$.
Thus $M(\tilde r,B)$ has no zero rows.
\end{proof}

\section{Mild groups}
\label{section on mild groups}
\subsection{Strongly free sequences}
Mild pro-$p$ groups were defined by Labute \cite{Labute85}, building on work by Anick \cite{Anick82} in the discrete case.
Labute's original definition was in the context of gradings obtained from the lower $p$-central filtration, and in a Lie-theoretic setting.
Later works used mostly gradings obtained from the $p$-Zassenhaus filtrations (see \S\ref{section on the Zassenhaus filtration} below).
In this section we give a uniform treatment of both situations using the notion of an initial form map.

For the definition of mildness, we first recall the \textsl{Golod--Shafarevich inequality} for finitely presented graded algebras \cite{Ershov12}:

Let again $X$ be a finite nonempty set of $d$ elements.
Let $\tau\colon X\to\dbZ_{>0}$ be a map with induced monoid morphism $\tau^*\colon X^*\to\dbZ_{\geq0}$, and let $\dbF$ be a field.
We make the free associative algebra $\dbF\langle X\rangle$ a graded algebra $\dbF\langle X\rangle^\tau$ where the degree $n$ component is the $\dbF$-module generated by all words $w\in X^*$ with $\tau^*(w)=n$.
We call its nonzero elements \textsl{$\tau$-homogeneous} of degree $n$.
We further write $\deg^\tau\rho$ for the degree of a $\tau$-homogeneous element $\rho$.

Given $\tau$-homogenous elements $\rho_1\nek\rho_m$ of positive degree in $\dbF\langle X\rangle^\tau$, let $\calR$ be the two-sided (graded) ideal they generate.
It gives rise to a graded $\dbF$-algebra $\dbF\langle X\rangle^\tau/\calR$.
Let $a_n$ be the dimension of the degree $n$ component of $\dbF\langle X\rangle^\tau/\calR$.
The \textsl{Hilbert series} of $\dbF\langle X\rangle^\tau/\calR$ is the formal power series
\[
H(z)=\sum_{n\geq0}a_nz^n.
\]
Then the following \textsl{Golod--Shafarevich inequality} holds coefficient-wise (See \cite{Ershov12}*{Th.\ 2.1}, \cite{Gartner15}*{Lemma 2.5}):
\[
H(z)\Bigl(1-\sum_{x\in X}z^{\tau(x)}+\sum_{j=1}^mz^{\deg^\tau\rho_j}\Bigr)\geq1.
\]
When this is an equality, $\rho_1\nek\rho_m$ are said to be \textsl{strongly free} with respect to $\tau$.

Equivalently, $\rho_1\nek\rho_m$ are strongly free if and only if $\calR/\calR I$ is a free $\dbF\langle X\rangle^\tau/\calR$-module on the images of $\rho_1\nek\rho_m$, where $I=\langle X\rangle$ is the augmentation ideal of $\dbF\langle X\rangle$ (\cite{Forre11}*{\S1}, \cite{Gartner15}*{Prop.\ 2.9}).
An alternative Lie-algebraic definition is given in \cite{Labute06} (see also \cite{Forre11}*{Remark 1.6}).  
As explained in \cite{Anick82}, the notion of a strongly free sequence may be viewed as the non-commutative analog of the notion of a regular sequence in commutative rings.

\subsection{Combinatorially free sequences}
Following Anick \cite{Anick82}, we call a sequence $w_1\nek w_m$ of nonempty words in $X^*$ \textsl{combinatorially free} if:
\begin{enumerate}
\item
No $w_i$ is a middle factor of $w_j$ for every $i\neq j$; and 
\item
No proper left factor of $w_i$ is a proper right factor of $w_j$ for every (possibly equal) $i,j$.
\end{enumerate}
Trivially, a subsequence of a combinatorially free sequence is also combinatorially free.

Given a total order $\leq$ on $X^*$, the \textsl{leading term} of a nonzero $\tau$-homogenous element $\gam=\sum_wc_ww$ in $\dbF\langle X\rangle^\tau$, with $0\neq c_w\in\dbF$, is the maximal $w$ in this sum with respect to $\leq$.

The following result was proved by Anick \cite{Anick82}*{Th.\ 3.1 and Th.\ 3.2} for the length-lexicographic order (Example \ref{length-lexicographic order}).
Forr\'e \cite{Forre11}*{Th.\ 2.6} observed that it holds for arbitrary total orders $\leq$ such that $(X^*,\leq)$ is an ordered monoid.

\begin{prop}
\label{Anick criterion}
Let $\leq$ be a total order on $X^*$ such that $(X^*,\leq)$ is an ordered monoid, and let $\tau\colon X\to\dbZ_{\geq0}$ be a map.
Suppose that $\gam_1\nek\gam_m$ are $\tau$-homogeneous elements of positive $\tau$-degree in $\dbF\langle X\rangle^\tau$. 
Let $w_1\nek w_m$ be the leading terms of $\gam_1\nek \gam_m$, respectively, with respect to $\leq$.
If the sequence $w_1\nek w_m$ is combinatorially free, then the sequence $\gam_1\nek\gam_m$ is strongly free.
\end{prop}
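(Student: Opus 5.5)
This is Anick's theorem in the generality observed by Forré, and the plan follows Anick's argument. We work in the graded algebra $A=\dbF\langle X\rangle^\tau$ with the two-sided ideal $\calR$ generated by $\gam_1\nek\gam_m$. The goal is the characterization recalled above: $\calR/\calR I$ is a free left $A/\calR$-module on the images of the $\gam_j$. Equivalently, the Golod--Shafarevich inequality is an equality. Both the order $\leq$ and $\tau$ are fixed, so all arguments are degree-by-degree and each graded piece is finite dimensional.

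\emph{Step 1 (normal words).} Call a word \emph{normal} if it has no middle factor equal to any $w_j$. We show that the images of normal words span $A/\calR$. This uses only that $(X^*,\leq)$ is an ordered monoid, through condition (ii'). Take a non-normal word $uw_jv$. Then $uw_jv-c\,u\gam_jv$, with $c$ a suitable scalar, is a combination of words $uw'v$ with $w'<w_j$ in the same $\tau$-degree, hence of words $<uw_jv$. Since each homogeneous component is finite, induction on $\leq$ inside it terminates. (Minimality of $1$ is not even needed here; finiteness of each degree component suffices.)

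\emph{Step 2 (counting / independence).} For a combinatorially free sequence we need the Hilbert series $N(z)$ of normal words to satisfy $N(z)\bigl(1-\sum_x z^{\tau(x)}+\sum_j z^{\deg^\tau\gam_j}\bigr)=1$. This is purely combinatorial. Because the $w_j$ have no self- or mutual overlaps (condition (2)) and none is a factor of another (condition (1)), every word factors uniquely by its occurrences of the $w_j$. An inclusion–exclusion, or equivalently the exact ``chain'' complex of Anick with no chains of length $\geq 2$, gives exactly this identity. Combined with Step 1, this yields $H(z)\leq N(z)$ coefficientwise. Together with the Golod--Shafarevich inequality $H(z)\cdot D(z)\geq 1$, where $D(z)$ denotes the displayed factor, this is not immediately conclusive, since multiplying by $D$ does not preserve $\leq$. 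So instead we prove directly that normal words are linearly independent modulo $\calR$.

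\emph{Step 3 (main obstacle): independence, i.e.\ a Gröbner basis property.} We must show that every nonzero element of $\calR$ has a non-normal leading term. A general element is $\sum_k c_k u_k\gam_{j_k}v_k$. By (ii') and cancellativity of $X^*$, the leading term of $u\gam_jv$ is $uw_jv$. Cancellations among top terms $u w_j v = u' w_{j'} v'$ with distinct triples force an overlap or inclusion between occurrences of $w_j,w_{j'}$ in the same word. Conditions (1) and (2) exclude both, except when the two occurrences are disjoint. The disjoint case $u w_j s w_{j'} v$ is the standard trivial S-polynomial: $(u\gam_j s)w_{j'}v$ versus $uw_js(\gam_{j'}v)$ rewrite to each other with strictly smaller remainders in $\calR$ expressed with smaller leading terms. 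Running Bergman's diamond lemma with the ordered-monoid order, a well-order on each homogeneous piece, shows all ambiguities resolve, so normal words form a basis of $A/\calR$. Thus $H(z)=N(z)$, and Step 2 gives equality in Golod--Shafarevich, i.e.\ strong freeness. The delicate point I expect is making the diamond-lemma termination rigorous for an arbitrary ordered-monoid order rather than deg-lex. My plan is to rely on homogeneity: restrict to a fixed $\tau$-degree, where there are finitely many words, so any total order compatible with multiplication gives the needed descending chain condition.
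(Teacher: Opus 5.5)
Your proof is correct and self-contained. The paper gives no argument of its own for this proposition; it only cites Anick (length-lexicographic order) and Forr\'e (arbitrary ordered-monoid orders).

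\textbf{Steps 1 and 2.} These are sound. Step 1 uses exactly (ii') together with cancellativity. The identity $N(z)D(z)=1$ holds because conditions (1) and (2) force any two distinct occurrences of the $w_j$ in a word to be disjoint. Words with a chosen set of marked occurrences are therefore free sequences of letters and marked blocks, and inclusion--exclusion gives $N=1/D$.

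\textbf{Step 3.} This also works. Use the semigroup partial order $w\prec w'$ iff $\tau^*(w)=\tau^*(w')$ and $w<w'$, which has the descending chain condition. Conditions (1) and (2) say exactly that the reduction system $w_j\mapsto w_j-c_j^{-1}\gam_j$ has no inclusion or overlap ambiguities, where $c_j$ is the leading coefficient of $\gam_j$.

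\textbf{The shortcut you dismissed.} Your remark that $H\leq N$ together with the Golod--Shafarevich inequality is ``not immediately conclusive'' is mistaken, and it makes Step 3 unnecessary. Put $E=N-H$, which has nonnegative coefficients. Since $ND=1$, the inequality $HD\geq 1$ reads $1-ED\geq 1$, i.e.\ every coefficient of $ED$ is $\leq 0$. If $E\neq 0$ and $e_kz^k$ is its lowest nonzero term, then the $z^k$-coefficient of $ED$ is $e_kD(0)=e_k>0$, a contradiction. Hence $H=N=1/D$, so the $\gam_j$ are strongly free. Since normal words span by Step 1, they then even form a basis degree by degree.

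\textbf{Comparison.} The comparison route is shorter and yields the same Gr\"obner-basis conclusion, but it uses the Golod--Shafarevich inequality, a homological input, as a black box. Your diamond-lemma route avoids that input and computes $H=1/D$ purely combinatorially.

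\textbf{Positivity of $\tau$.} Both routes need $\tau$ to take positive values: this gives finite-dimensional graded pieces and $D(0)=1$. This matches the paper's setup of the Golod--Shafarevich inequality, so the $\dbZ_{\geq0}$ in the statement should be read as $\dbZ_{>0}$.
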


\subsection{Initial forms}
\label{subsection on initial forms}
Now take $\dbF$ to be a field of characteristic $p$ and consider again a minimal presentation $G=F/R$ as in (\ref{minimal presentation}), where $F$ is a free pro-$p$ group on a finite basis $X$ of size $d$, and $R$ is its closed normal subgroup generated by relations $r_1\nek r_m$. 
Let $\tau\colon X\to\dbZ_{>0}$ be a map as before.
Consider a map 
\[
I\colon R\to \dbF\langle X\rangle,
\]
which we call an \textsl{initial form map}. 

\begin{defin}
\label{Definition of mildness}
\rm
We say that the above presentation of $G$ is \textsl{mild with respect to $\tau$ and $I$} if there are generators $r_1\nek r_m$ as above such that $I(r_1)\nek I(r_m)$ are $\tau$-homogenous elements of positive $\tau$-degree in $\dbF\langle X\rangle^\tau$ which form a strongly free sequence with respect to $\tau$.
\end{defin}

Examples of initial form maps related to natural filtrations of $F$ will be given in \S\ref{section on the Zassenhaus filtration}.

\section{The mildness criterion}
\label{section on the mildness criterion}
We fix a finite nonempty set $X$, let $\tau\colon X\to\dbZ_{>0}$ be a map, and let $\tau^*\colon X^*\to\dbZ_{\geq0}$ the induced monoid morphism.
In particular, for the constant map $\tau=1$, the induced map $\tau^*$ is the length map.

Let $G$ be a finitely presented pro-$p$ group with minimal presentation $G=F/R$ as in (\ref{minimal presentation}), where $F$ is the free pro-$p$ group on the basis $X$. 
We further fix a generalized Magnus homomorphism $\Lam\colon F\to\dbF_p\langle\langle X\rangle\rangle^{\times,1}$.

We can now state our main criterion for mildness (Theorem A):

\begin{thm}
\label{main theorem on mildness}
Let $\leq$ be a total order on $X^*$ such that $(X^*,\leq)$ is an ordered monoid.
Let $A$ be a finite set of nonempty words in $X^*$ which are compatible with the presentation $G=F/R$ and which have the same $\tau$-degree. 
Suppose that $B$ is a subset $A$ such that:
\begin{enumerate}
\item[(a)]
The set $B$ is combinatorially free.
\item[(b)]
If $w_1\in B$ and $w_2\in A$ satisfy $\alp_{w_1},\alp_{w_2}\neq0$ and $w_1\leq w_2$, then also $w_2\in B$.
\item[(c)]
The cohomology elements $\alp_w$, $w\in B$, generate $H^2(G)$.
\end{enumerate}
Consider the initial form map 
\[
I\colon R\to\dbF_p\langle X\rangle^\tau, \quad
I(r)=\sum_{w\in A}\eps_w(r)w.
\]
Then there are generators $r_1\nek r_m$ of $R$ as a closed normal subgroup of $F$ such that the sequence $I(r_1)\nek I(r_m)$ is strongly free in $\dbF_p\langle X\rangle^\tau$.
In particular, the presentation $G=F/R$ is mild with respect to $\tau$ and $I$. 
\end{thm}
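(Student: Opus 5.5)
Order the set $B$ decreasingly with respect to $\leq$, say $B=\{w_1>w_2>\cdots>w_k\}$, and apply Theorem \ref{conditions on the matrix M} to $B$ with this total order. All words in $B\subseteq A$ are compatible with the presentation, and by (c) the elements $\alp_w$, $w\in B$, generate $H^2(G)$. Hence condition (a) of that theorem supplies a minimal generating system $\tilde r=(r_1\nek r_m)$ of $R$ as a closed normal subgroup of $F$ such that $M(\tilde r,B)$ is in row echelon form with no zero rows. For each $j$ let $u_j\in B$ be the pivot word of row $j$. Then $\eps_{u_j}(r_j)\neq0$ and $\eps_w(r_j)=0$ for every $w\in B$ with $w>u_j$. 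Since the matrix is in echelon form, the pivots $u_1\nek u_m$ are pairwise distinct.

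Next I show that $u_j$ is the leading term of $I(r_j)=\sum_{w\in A}\eps_w(r_j)w$ with respect to $\leq$. Since $A$ is finite and all its words have the same $\tau$-degree, $I(r_j)$ is $\tau$-homogeneous, and it is nonzero because $\eps_{u_j}(r_j)\neq0$. Its degree is positive because the words are nonempty and $\tau>0$. Suppose $w\in A$ satisfies $w>u_j$ and $\eps_w(r_j)\neq0$. By (\ref{Magnus coefficients as transgression pairings}), $\eps_w(r_j)=(\bar r_j,\alp_w)$, so $\alp_w\neq0$. Similarly $\alp_{u_j}\neq0$, since $\eps_{u_j}(r_j)\neq0$. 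Condition (b), applied with $w_1=u_j$ and $w_2=w$, then gives $w\in B$. But then $\eps_w(r_j)=0$ by the echelon form, a contradiction. So the leading term of $I(r_j)$ is exactly $u_j$. This is the only place where (b) is used, and it is the step I expect to be the heart of the argument: the pairing identity (\ref{Magnus coefficients as transgression pairings}) is what turns nonvanishing of Magnus coefficients into nonvanishing of $\alp_w$.

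Finally, $u_1\nek u_m$ are distinct elements of the combinatorially free set $B$, so as a sequence they are combinatorially free. Condition (1) of combinatorial freeness for distinct words follows from the same condition on $B$, and condition (2) is inherited. Since $(X^*,\leq)$ is an ordered monoid, Proposition \ref{Anick criterion} applied to $\gam_j=I(r_j)$ shows that $I(r_1)\nek I(r_m)$ is strongly free with respect to $\tau$. The $r_j$ generate $R$ as a closed normal subgroup, so by Definition \ref{Definition of mildness} the presentation is mild with respect to $\tau$ and $I$.
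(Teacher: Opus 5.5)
Your proposal is correct and follows the paper's proof: apply Theorem~\ref{conditions on the matrix M} to $B$ with the reversed order, use (\ref{Magnus coefficients as transgression pairings}) together with (b) to identify the pivot of row $j$ with the leading term of $I(r_j)$, and then invoke Proposition~\ref{Anick criterion}. The only difference is cosmetic: you start from the pivot and rule out larger words by contradiction, whereas the paper starts from $\max A_j$ and uses (b) to place it in $B$.
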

\begin{proof}
We may apply Theorem \ref{conditions on the matrix M} with respect to $B$ totally ordered by the \textsl{opposite} order $\leq^{\rm op}$ of $\leq$.
In view of (c), it yields a minimal system $\tilde r=(r_1\nek r_m)$ of generators for $R$ as a closed normal subgroup of $F$ such that the matrix $M(\tilde r,B)=[\eps_w(r_j)]_{1\leq j\leq m,w\in B}$ is in a row echelon form with no zero rows.

For every $1\leq j\leq m$ let 
\[
A_j=\{w\in A\ |\ \eps_w(r_j)\neq0\}.
\]
By (\ref{Magnus coefficients as transgression pairings}) and the compatibility, for every $w\in A_j$ we have $\eps_w(r_j)=(\bar r_j,\alp_w)$, so $\alp_w\neq0$. 
Since $B\subseteq A$ and $M(\tilde r,B)$ has no zero rows, $A_j\cap B\neq\emptyset$. 
It follows from (b) that $w_j:=\max(A_j)\in B$ (the maximum taken with respect to $\leq$).
Thus $w_j$ corresponds to the leading term in the $j$th row of $M(\tilde r,B)$.
Since $M(\tilde r,B)$ is in a row echelon form, $w_1\nek w_m$ are distinct.
By (a), they are combinatorially free.

Now consider
\[
I(r_j)=\sum_{w\in A}\eps_w(r_j)w=\sum_{w\in A_j}\eps_w(r_j)w
\]
as homogenous elements of positive degree in $\dbF_p\langle X\rangle^\tau$.
Then the leading term of $I(r_j)$ with respect to $\leq$ is $w_j$.
Anick's criterion (Proposition \ref{Anick criterion}) therefore implies that the sequence $I(r_1)\nek I(r_m)$ is strongly free.
Consequently, the presentation $G=F/R$ is mild with respect to $\tau$ and $I$. 
\end{proof}

\section{Partitions and mildness}
\label{section on partictions and mildness}
In this section we use partitions of the alphabet $X$ to produce ordered structures on the monoid $X^*$ with special properties.
In \S\ref{section on Massey product criteria} we will apply these constructions to obtain mildness criteria based on Massey products.

\subsection{An auxiliary construction} 
\label{subsection on an auxiliary construction}
We fix a total order $\leq$ on $X$ and a map $\tau\colon X\to\dbZ_{\geq0}$.
Given an additional map $\sig\colon X\to\dbZ_{\geq0}$, we set as before $\sig^*(w)=\sum_{i=1}^n\sig(a_i)$ for a word $w=(a_1\cdots a_n)$.
We also define a map $\sig^\#\colon X^*\to\dbZ_{\ge0}$ by setting
\[
\sig^\#(w)=\sum_{i=1}^n\sig(a_i)(\tau(a_1)+\cdots+\tau(a_i)).
\]
In particular, for the empty word $1$ one has $\sig^*(1)=\sig^\#(1)=0$.
We observe that for every $w,u$ one has
\begin{equation}
\label{almost multiplicativity}
\begin{split}
\sig^*(wu)&=\sig^*(w)+\sig^*(u),\\
\sig^\#(wu)&=\tau^*(w)\sig^*(u)+\sig^\#(w)+\sig^\#(u).
\end{split}
\end{equation}

Given a sequence of maps $\sig_1\nek\sig_s\colon X\to\dbZ_{\geq0}$, we define a map $g\colon X^*\to \dbZ_{\geq0}^{1+2s}\times X^*_{\rm lex}$ by
\begin{equation}
\label{g in generalized Gartners construction}
g=(\tau^*,\sig_s^*,\sig_s^\#,\sig_{s-1}^*,\sig_{(s-1)}^\#\nek \sig_1^*,\sig_1^\#,\id_{X^*}).
\end{equation}
As in \S\ref{subsection on a lexicographic construction}, we order $X^*$ by
\[
w\leq_gw'\ \Leftrightarrow\ g(w)\leq_{\rm lex}g(w'),
\]
where $\leq_{\rm lex}$ is the lexicographic order on $\dbZ_{\geq0}^{1+2s}\times X^*_{\rm lex}$.

\begin{lem}
\label{g gives an ordered monoid}
$(X^*,\leq_g)$ is an ordered monoid.  
\end{lem}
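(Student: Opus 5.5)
We apply Lemma \ref{g} with $\calN=X^*$, $t=2+2s$, and target monoids $\calM_1=\dbZ_{\geq0}$ (for $\tau^*$), $\calM_2=\cdots=\calM_{1+2s}=\dbZ_{\geq0}$ (for the pairs $\sig_k^*,\sig_k^\#$ in the order $k=s,s-1\nek1$), and $\calM_{2+2s}=X^*_{\rm lex}$ (for $\id_{X^*}$). All of these are cancellative ordered monoids. For $\dbZ_{\geq0}$ under addition this is clear. For $X^*_\lex$ it holds because $X^*$ is free, hence cancellative, and it is an ordered monoid by the Example preceding Example \ref{length-lexicographic order}. Each component of $g$ sends the empty word to the unit: we have $\tau^*(1)=\sig_k^*(1)=\sig_k^\#(1)=0$ and $\id(1)=1$. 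Moreover $g$ is injective because its last component is $\id_{X^*}$.

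The main work is to produce the maps $h_i$ satisfying (\ref{connection between maps}), with $h_i$ depending only on the values of the earlier components on $w$ and $u$. We proceed component by component.
\begin{enumerate}
\item[(i)] For $g_1=\tau^*$ take $h_1=0$, since $\tau^*$ is additive.
\item[(ii)] For a component $\sig_k^*$ take $h=0$, since $\sig_k^*$ is additive by (\ref{almost multiplicativity}).
\item[(iii)] For a component $\sig_k^\#$, (\ref{almost multiplicativity}) gives $\sig_k^\#(wu)=\tau^*(w)\sig_k^*(u)+\sig_k^\#(w)+\sig_k^\#(u)$. We therefore set $h\bigl((m_1,\ldots),(m'_1,\ldots)\bigr)=m_1\cdot m'_j$, where $m_1$ is the first coordinate (the $\tau^*$-value of $w$) and $m'_j$ is the coordinate holding $\sig_k^*(u)$. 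This is legitimate precisely because the ordering in (\ref{g in generalized Gartners construction}) places $\tau^*$ and $\sig_k^*$ before $\sig_k^\#$.
\item[(iv)] For the final component $\id_{X^*}$ take $h$ to be constantly the empty word, since $\id(wu)=wu$.
\end{enumerate}

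Lemma \ref{g} then shows directly that $(X^*,\leq_g)$ is an ordered monoid.

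Verifying the hypotheses of Lemma \ref{g} is routine. The one point needing care is the $\sig^\#$ components: the cross term $\tau^*(w)\sig^*(u)$ must be expressible through components of $g$ appearing earlier in the tuple. This holds by the chosen ordering, with $\tau^*$ first and each $\sig_k^*$ immediately before $\sig_k^\#$.

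For completeness we also check (\ref{almost multiplicativity}) itself. Write $w=(a_1\cdots a_n)$ and $u=(b_1\cdots b_l)$. In $\sig^\#(wu)$, the letter $b_i$ is weighted by $\tau^*(w)+\tau(b_1)+\cdots+\tau(b_i)$. Summing over $i$ gives $\tau^*(w)\sig^*(u)+\sig^\#(u)$, and adding the contribution of the letters of $w$ gives the stated identity.
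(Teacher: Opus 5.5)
Your route is exactly the paper's: invoke Lemma \ref{g} for the components of $g$, get injectivity from the last component, and read off the $h_i$ from (\ref{almost multiplicativity}). Your explicit $h_i$ are correct, including $h=\tau^*(w)\,\sig_k^*(u)$ for the $\sig_k^\#$ slot, which is legitimate because $\tau^*$ and $\sig_k^*$ precede $\sig_k^\#$. However, one fact you cite is false, and the paper's one-line proof relies on it too: $X^*_\lex$ is \emph{not} an ordered monoid. The lexicographic order respects left multiplication but not right multiplication. If $a<b$ in $X$, then $1<_\lex a$, yet $ab<_\lex b=1\cdot b$. The proof of Lemma \ref{g} applies Remark \ref{remarks on ordered monoids}(1) in $\calM_{i+1}$, so it cannot be used as a black box when the last component is $\calM_{2+2s}=X^*_\lex$.

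The repair is to redo the final step of the proof of Lemma \ref{g} directly. You only reach the last coordinate when all earlier coordinates agree for both pairs. In particular $\tau^*(w)=\tau^*(w')$ and $\tau^*(u)=\tau^*(u')$, with $w\le_\lex w'$, $u\le_\lex u'$, and not both equalities. If $\tau$ takes positive values, $w$ cannot be a proper prefix of $w'$, since that would force $\tau^*(w')>\tau^*(w)$. There are then two cases. If $w=w'$, then $u<_\lex u'$ and $wu<_\lex wu'$ by left compatibility. Otherwise $w$ and $w'$ first differ at some position $\le\min(|w|,|w'|)$, and then $wu<_\lex w'u'$ whatever $u,u'$ are.

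Positivity of $\tau$ is genuinely needed, yet \S\ref{subsection on an auxiliary construction} only assumes $\tau\colon X\to\dbZ_{\geq0}$. Suppose a letter $c$ has $\tau(c)=\sig_1(c)=\cdots=\sig_s(c)=0$ and $c<b$ for some $b\in X$. Then all coordinates of $g(cb)$ and $g(b)$ except the last agree, so $cb<_g b$ even though $1\le_g c$, and the lemma fails. With $\tau>0$, as in \S\ref{section on the mildness criterion} and in the application $\tau=1$, the argument above completes your proof.
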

\begin{proof}
Since the rightmost component of $G$ is the identity, $g$ is injective.
Now apply Lemma \ref{g} using (\ref{almost multiplicativity}).  
\end{proof}

The case $s=1$ of the lemma recovers \cite{Gartner15}*{Lemma 3.7}.

\subsection{The case $\tau=1$}
We now focus on the constant weight map $\tau=1$, i.e., $\tau(x)=1$ for every $x\in X$.
Recall that $1^*(w)=|w|$ for every word $w$.

\begin{prop}
\label{A max}
Let $k_0,k_1\nek k_s$ be nonnegative integers and set $n=k_0+k_1+\cdots+k_t$.
Suppose that $X=Y_0\discup Y_1\discup\cdots\discup Y_s$, and write $\sig_j\colon X\to\{0,1\}$ for the characteristic map of $Y_j$, $j=1,2\nek s$.
Let $C\subseteq X^n$, let $A$ be the set of all words $w=(a_1\cdots a_n)\in C$ such that 
\[
\bigl|\{1\leq i\leq n\ |\ a_i\in Y_j\}\bigr|\leq k_j, \quad j=1,2\nek s,
\]
and let 
\[
B=C\cap \prod_{j=0}^sY_j^{k_j}.
\] 
Finally, let $g$ be as in (\ref{g in generalized Gartners construction}).
Then for every $w_1\in B$ and $w_2\in A$ with $w_1\leq_gw_2$ we have $w_2\in B$.
\end{prop}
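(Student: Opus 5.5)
Since $\tau=1$, all words in $C\subseteq X^n$ have the same first component $|w|=n$ of $g$, so the comparison $w_1\leq_g w_2$ is decided by the components $\sig_s^*,\sig_s^\#,\sig_{s-1}^*,\sig_{s-1}^\#,\dots,\sig_1^*,\sig_1^\#$, and only then by the lexicographic order on $X^*$. Here $\sig_j^*(w)$ counts the letters of $w$ lying in $Y_j$. With $\tau=1$, $\sig_j^\#(w)$ is the sum of the positions $i$ with $a_i\in Y_j$.

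Fix $w_1\in B$, so $w_1$ consists of a block of $k_0$ letters from $Y_0$, followed by $k_1$ letters from $Y_1$, and so on, ending with $k_s$ letters from $Y_s$. Then $\sig_j^*(w_1)=k_j$ for each $j$, and
\[
\sig_j^\#(w_1)=\sum_{i=K_{j-1}+1}^{K_j} i, \qquad K_j=k_0+\cdots+k_j .
\]
This is the largest possible value of $\sum_{i\in P}i$ over sets $P$ of $k_j$ positions contained in $\{1,\dots,K_j\}$, and it is attained only for $P=\{K_{j-1}+1,\dots,K_j\}$.

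Take $w_2\in A$ with $w_1\leq_g w_2$. We argue by descending induction on $j=s,s-1,\dots,1$ that the letters of $w_2$ in positions $K_{j-1}+1,\dots,K_j$ lie in $Y_j$, and that $w_2$ has no other letters from $Y_j$.

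For $j=s$, the constraint defining $A$ gives $\sig_s^*(w_2)\leq k_s=\sig_s^*(w_1)$. Since $w_1\leq_g w_2$ and all earlier components agree, we get $\sig_s^*(w_2)=k_s$. Hence $w_2$ has exactly $k_s$ letters in $Y_s$, at positions in $\{1,\dots,n\}$, so $\sig_s^\#(w_2)\leq\sig_s^\#(w_1)$ by the maximality just noted. Comparing the next component forces equality, and the uniqueness in the maximality statement forces the $Y_s$-letters of $w_2$ to occupy exactly the last $k_s$ positions.

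In the inductive step for $j$, the $Y_{j'}$-letters of $w_2$ for all $j'>j$ already fill positions $K_j+1,\dots,n$. So the $Y_j$-letters of $w_2$ lie in $\{1,\dots,K_j\}$, and the same two-step argument applies: first $\sig_j^*$ is forced to equal $k_j$, then $\sig_j^\#$ is forced to be maximal, which pins down their positions.

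After the step $j=1$, positions $k_0+1,\dots,n$ are filled as required. The remaining $k_0$ positions hold no letters from $Y_1,\dots,Y_s$, so they lie in $Y_0$. Thus $w_2\in C\cap\prod_{j}Y_j^{k_j}=B$.

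The only subtlety is the logic of the lexicographic comparison at each stage. The inequality $w_1\leq_g w_2$ means that either all compared components agree or the first differing one is larger for $w_2$. We have shown each component of $w_2$ is at most the corresponding one of $w_1$, given equality of all preceding components, so at each stage the component must agree, and the induction proceeds. I expect this bookkeeping, together with the uniqueness of the maximizing position set, to be the main point to handle carefully. The final $X^*_{\rm lex}$ component is never needed.
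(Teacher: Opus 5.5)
Your proof is correct and follows essentially the same route as the paper. The paper sets $Z_r=(Y_0\cup\cdots\cup Y_r)^{k_0+\cdots+k_r}\times\prod_{j>r}Y_j^{k_j}$, takes $r$ minimal with $w_2\in Z_r$, and for $r\geq 1$ derives a contradiction by comparing first $\sigma_r^*$ and then $\sigma_r^\#$ (the $Y_r$-letters cannot all occupy the rightmost $k_r$ of the first $k_0+\cdots+k_r$ places), which is exactly your inductive step from $w_2\in Z_j$ to $w_2\in Z_{j-1}$, phrased as a minimal-counterexample argument.
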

\begin{proof}
First we note that $B\subseteq A$.
For $0\leq r\leq s$ set
\[
Z_r=(Y_0\cup Y_1\cup\cdots\cup Y_r)^{k_0+\cdots+k_r}\times\prod_{j=r+1}^sY_j^{k_j}.
\]
One has 
\[
B\subseteq\prod_{j=0}^sY_j^{k_j}=Z_0\subseteq Z_1\subseteq\cdots \subseteq Z_{s-1}\subseteq Z_s=X^n.
\]

We claim that the maps
\begin{equation}
\label{list of maps}
|\cdot|,\sig_s^*, \sig_{s-1}^*\nek  \sig_{r+1}^*, \sig_s^\#, \sig_{s-1}^\# \nek \sig_{r+1}^\#
\end{equation}
are constant on $Z_r$.
Indeed, for $w\in Z_r$ and $r+1\leq j\leq s$ one has
\[
|w|=n, \quad \sig_j^*(w)=k_j,
\]
\[
\sig_j^\#(w)=\sum\{i\ |\ k_0+\cdots+k_{j-1}<i\leq k_0+\cdots+k_j\}.
\]
 
Now take as in the assertion 
\[
w_1=(a_{11}\cdots a_{1n})\in B, \quad w_2=(a_{21}\cdots a_{2n})\in A
\]
with $w_1\leq_gw_2$.
We also take $0\leq r\leq s$ minimal such that $w_2\in Z_r$.
Since $w_1\in B\subseteq Z_r$, and by the claim, the maps in (\ref{list of maps}) coincide on $w_1$ and $w_2$. 

Suppose that $r\geq1$.
Then $\sig_r^*(w_1)=k_r$ and $\sig_r^*(w_2)\leq k_r$, by the definition of $A$.
If $\sig_r^*(w_2)<k_r$, then $w_2<_gw_1$, a contradiction.
Therefore $\sig_r^*(w_2)=k_r$, i.e., $w_2$ has exactly $k_r$ letters from $Y_r$.

Moreover, as $w_2\in Z_r$, these letters can appear only among the first $k_0+\cdots+k_r$ places. 
By the minimality of $r$, $w_2\not\in Z_{r-1}$, so these letters are not located at the $k_r$ rightmost of these, that is, at places
\[
k_0+\cdots+k_{r-1}+1\nek k_0+\cdots+k_r
\]
(as is the case for $w_1$).
Hence 
\[
\sig_r^\#(w_2)=\sum_{a_{2i}\in Y_r}i<\sum_{a_{i1}\in Y_r}i=\sig_r^\#(w_1), 
\] 
and we obtain again the contradiction $w_2<_gw_1$.

Consequently, $r=0$, so $w_2\in Z_0=\prod_{j=0}^sY_j^{k_j}$.
As $w_2\in A\subseteq C$, we deduce that $w_2\in B$.
\end{proof}

\subsection{A mildness criterion}
Let $G=F/R$ be again a minimal finite presentation as in (\ref{minimal presentation}), where $F$ is the free pro-$p$ group on the finite nonempty basis $X$.
We deduce our mildness criterion for this setting:

\begin{thm}
\label{partition criterion}
Let $s,k_0,k_1\nek k_s$ be positive integers and set $n=k_0+k_1+\cdots+k_s$.
Suppose that $X=Y_0\discup Y_1\discup\cdots\discup Y_s$ where $Y_0,Y_1\nek Y_s$ are nonempty and finite.
Let $C$ be the set of all words in $X^n$ which are compatible with the presentation $G=F/R$.
Assume that:
\begin{enumerate}
\item[(i)]
For every word $w\in C$ which contains more than $k_j$ letters from $Y_j$ for some $1\leq j\leq s$, one has $\alp_w=0$.
\item[(ii)]
The cohomology elements $\alp_w$, for $w\in C\cap \prod_{j=0}^sY_j^{k_j}$, generate $H^2(G)$.
\end{enumerate}
Then $G$ is mild with respect to the initial form map 
\[
I\colon R\to\dbF_p\langle X\rangle, \quad 
I(r)=\sum_{w\in C}\eps_w(r)w.
\]
\end{thm}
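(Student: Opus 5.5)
The plan is to apply Theorem \ref{main theorem on mildness} with $\tau=1$, the ordered monoid $(X^*,\leq_g)$ of Lemma \ref{g gives an ordered monoid}, and suitable sets $A$ and $B$. Since the initial form map in the statement sums over all of $C$, we would like to take $A=C$. However, Proposition \ref{A max} only controls words satisfying the letter-count bounds. We therefore first observe that hypothesis (i) makes the extra words harmless. Let $A'$ denote the set of $w\in C$ with at most $k_j$ letters from $Y_j$ for $j=1\nek s$, so $A'$ is exactly the set $A$ of Proposition \ref{A max}. Let $B=C\cap\prod_{j=0}^sY_j^{k_j}$. For $w\in C\setminus A'$ we have $\alp_w=0$, so by (\ref{Magnus coefficients as transgression pairings}) $\eps_w(r)=(\bar r,\alp_w)=0$ for all $r\in R$. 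Hence $I(r)=\sum_{w\in A'}\eps_w(r)w$, and it suffices to apply the theorem with $A'$ in place of $A$. (Alternatively, we can keep $A=C$: condition (b) of the theorem only concerns $w_2$ with $\alp_{w_2}\ne0$, and such $w_2$ lie in $A'$.) All words in $C$ have length $n$, so they share the same $\tau$-degree, and $C$ is finite.

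Next we verify the three conditions of Theorem \ref{main theorem on mildness}. Condition (b) is exactly Proposition \ref{A max}: if $w_1\in B$, $w_2\in A'$ and $w_1\le_g w_2$, then $w_2\in B$. Condition (c) is hypothesis (ii). Condition (a) asks that $B$ be combinatorially free. Since all words in $B$ have the same length $n$, no word is a proper middle factor of another; distinct words of equal length cannot be middle factors of each other at all. So we must show that no proper left factor of a $w\in B$ is a proper right factor of some $w'\in B$.

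This is where positivity of $s,k_0,k_1$ enters. A proper nonempty left factor $u$ of $w\in\prod_j Y_j^{k_j}$ begins with a letter of $Y_0$, because $k_0\ge1$. A proper nonempty right factor $u$ of $w'$ ends with a letter of $Y_s$, because $k_s\ge1$. If $u$ is both, then $|u|<n$ and $u$ is simultaneously the prefix of length $|u|$ of $w$ and the suffix of length $|u|$ of $w'$. Position $i$ of a word in $\prod_jY_j^{k_j}$ lies in a block determined by $i$ alone. The first letter of $u$ lies in $Y_0$ as position $1$ of $w$, and it must equal position $n-|u|+1>1$ of $w'$. Since $Y_0\cap Y_j=\emptyset$ for $j\ge1$, this position must also belong to block $0$, so $n-|u|+1\le k_0$. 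Similarly, the last letter of $u$ is in $Y_s$ and sits at position $|u|$ of $w$, which forces $|u|>n-k_s$. Both conditions can hold simultaneously, so the block argument alone does not yet give a contradiction. We refine it by comparing letter by letter: position $i$ of $u$ lies in the block of $i$ in $w$ and in the block of $n-|u|+i$ in $w'$. These blocks must coincide for every $i$, but the block index of $n-|u|+i$ is at least that of $i$. At $i=|u|$ the block index in $w'$ is $s$, while $|u|\le n-1$ in $w$. Equality of blocks at every position forces a shift of $n-|u|>0$ that preserves all block boundaries. Since $s\ge1$, there are at least two blocks, and such a shift is impossible: the last position of block $0$ in $w$, namely $i=k_0$, must map into block $0$ in $w'$, giving $n-|u|+k_0\le k_0$, which is false. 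This should complete (a).

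The main point of care is the combinatorial freeness, specifically making the block-shift argument rigorous; the rest is direct assembly of Proposition \ref{A max}, the reduction via (i), and Theorem \ref{main theorem on mildness}. Its conclusion then gives generators $r_1\nek r_m$ with $I(r_1)\nek I(r_m)$ strongly free, i.e.\ mildness.
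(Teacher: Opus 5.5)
Your proposal is correct and follows the paper's proof. It uses the same order $\leq_g$, the same sets $A$ (the words of $C$ satisfying the letter bounds) and $B=C\cap\prod_{j}Y_j^{k_j}$, gets (b) from Proposition \ref{A max} and (c) from (ii), and uses hypothesis (i) with (\ref{Magnus coefficients as transgression pairings}) to identify $\sum_{w\in A}\eps_w(r)w$ with $\sum_{w\in C}\eps_w(r)w$; you do this identification before applying Theorem \ref{main theorem on mildness}, the paper after. Your block-shift argument for combinatorial freeness spells out the paper's one-line claim, and its use of $i=k_0$ requires $k_0\le|u|$, which follows from your earlier bound $|u|>n-k_s\ge k_0$.
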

\begin{proof}
We fix an arbitrary total order on $X$ and define a total order $\leq_g$ on $X^*$ as in \S\ref{subsection on an auxiliary construction}.
By Lemma \ref{g gives an ordered monoid}, $(X^*,\leq_g)$ is an ordered monoid.

Let $A$ be the set of all words $w=(a_1\cdots a_n)\in C$ such that 
\[
|\{1\leq i\leq n\ |\ a_i\in Y_j\}|\leq k_j, \quad j=1,2\nek s.
\]
By (i),
\begin{equation}
\label{A C equal for nonzero alpha}
\{w\in A\ |\ \alp_w\neq0\}=\{w\in C\ |\ \alp_w\neq0\}.
\end{equation}
Let $B=C\cap \prod_{j=0}^sY_j^{k_j}$, so $B\subseteq A$.
We verify conditions (a)--(c) of Theorem \ref{main theorem on mildness} with $\tau=1$.

Since all words in $B$ have length $n$, non of them can be a proper middle factor of another word in $B$.
Since the $Y_j$ are nonempty and disjoint, and $s,k_1\nek k_s\geq1$, no proper left factor of a word in $B$ is a proper right factor of a word in $B$.
Therefore $B$ is combinatorically free, so condition (a) is satisfied.

Condition (b) is just Proposition \ref{A max}.

Finally, condition (c) is just assumption (ii).

Theorem \ref{main theorem on mildness} therefore implies that $G$ is mild with respect to the initial form map \[
I\colon R\to\dbF_p\langle X\rangle, \quad 
I(r)=\sum_{w\in A}\eps_w(r)w.
\]
Moreover, using (\ref{Magnus coefficients as transgression pairings}) and (\ref{A C equal for nonzero alpha}) we obtain:
\[
I(r)=\sum_{w\in A}\eps_w(r)w
=\sum_{w\in A,\alp_w\neq0}\eps_w(r)w
=\sum_{w\in C,\alp_w\neq0}\eps_w(r)w
=\sum_{w\in C}\eps_w(r)w.
\qedhere
\]
\end{proof}

\section{The Zassenhaus filtration}
\label{section on the Zassenhaus filtration}
Initial form maps (see \S\ref{subsection on initial forms}) provide a bridge between the free pro-$p$ group $F$ and the graded algebra $\dbF_p\langle X\rangle$.
They can be conveniently constructed using natural filtrations on $F$, such as the $p$-Zassenhaus filtration.
In this section we recall some basic facts about this and other standard filtrations of pro-$p$ groups.
The resulting initial form maps will be used in the next section to provide concrete applications of Theorem \ref{partition criterion} using Massey products.

\subsection{Definition and main properties}
The \textsl{$p$-Zassenhaus filtration}  $G_{(n)}$, $n=1,2\nek$ of a pro-$p$ group $G$ (also called the \textsl{modular dimension filtration} \cite{DixonDuSautoyMannSegal99}*{Ch.\ 11})  is defined inductively by
\[
G_{(1)}=G, \quad G_{(n)}=(G_{(\lceil n/p\rceil)})^p\prod_{n=i+j}[G_{(i)},G_{(j)}]
\]
for $n\geq2$.
One has $G_{(n)}\supseteq G_{(n+1)}$.

In particular, take a free pro-$p$ group $F$ on the basis $X$.
Let  $\Lam\colon F\to\dbF_p\langle\langle X\rangle\rangle^{\times,1}$ be the standard Magnus homomorphism, and $\eps_w\colon F\to\dbF_p$, $w\in X^*$, its coefficient maps.
Then $F_{(n)}$ consists of all $f\in F$ such that $\eps_w(f)=0$ for all $w\in X^*$ with $1\leq|w|<n$ (see \cite{Morishita12}*{\S\ 8.3}, \cite{Vogel05}*{Th.\ 2.19(ii)}, \cite{Efrat14}*{Prop.\ 6.2}).
By the injectivity of $\Lam$ (see \S\ref{section on unitriangular cohomology elements}), $\bigcap_{n\geq1}F_{(n)}=\{1\}$.

\subsection{The $p$-Zassenhaus invariant}
\label{subsection on the p-Zassenhaus invariant}
Let $G$ be a finitely presented pro-$p$ group.
The \textsl{Zassenhaus invariant} $\grz(G)$ of $G$ is the supremum of all $n\in\dbZ_{\geq0}\cup\{\infty\}$ such that there is a minimal presentation $G=F/R$ with $R\subseteq F_{(n)}$ (with $F_{(\infty)}$ understood as $\{1\}$).
By the minimality, $R\subseteq F_{(2)}$, so $\grz(G)\geq2$.

By \cite{Vogel04}*{Cor.\ 1.2.9}, $\grz(G)$ is the supremum of all $n$ such that the $k$-fold Massey product $\langle\cdot\nek\cdot\rangle\colon H^1(G)^k\to H^2(G)$ (see \S\ref{subsection on Massey products}) is identically zero for every $2\leq k<n$.
Further, when $\grz(G)=n$, the $n$-fold Massey product is uniquely defined and is multi-linear \cite{Vogel04}*{Lemma 1.2.5}.
We therefore write its image as an element, rather than a subset, of $H^2(G)$.

\begin{lem}
\label{Zassenhaus invariant and compatibility}
Let $G=F/R$ be a minimal presentation, with $F$ free pro-$p$ on the basis $X$, and suppose that $\grz(G)=n$.
Then:
\begin{enumerate}
\item[(a)]
For every $r\in R$ and every word $w\in X^*$ of length $1\leq|w|<n$ one has $\eps_w(r)=0$.
\item[(b)]
Every word $w$ of length $n$ is compatible with this representation.
\end{enumerate}
\end{lem}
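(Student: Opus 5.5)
Here the Magnus homomorphism $\Lam$ is the standard one of Example \ref{standard Magnus homomorphism}, since that is the one used to describe $F_{(n)}$ in the previous subsection. The proof rests on this description of $F_{(n)}$ by Magnus coefficients.

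For (a), I would argue as follows. The invariant $\grz(G)=n$ is defined as a supremum of integers, so it is attained. Hence there is some minimal presentation $G=F'/R'$ with $R'\subseteq F'_{(n)}$. The subtle point is that the lemma refers to the \emph{given} presentation $G=F/R$, so I first want to transfer this containment to it. Two minimal presentations of $G$ with free groups of the same rank $d=\dim H^1(G)$ are isomorphic. Indeed, choose a lift $F\to F'$ of the identity of $G$ by freeness. It is surjective, since it is surjective modulo the Frattini subgroup $F'^p[F',F']$: both quotients identify with $H^1(G)^\vee$ by the minimality. A surjection between finitely generated free pro-$p$ groups of the same rank is an isomorphism, and it carries $R$ onto $R'$. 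The Zassenhaus filtration is defined group-theoretically, hence is preserved by isomorphisms, so $R\subseteq F_{(n)}$ follows.

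Then the characterization recalled above applies: $F_{(n)}$ consists of the $f\in F$ with $\eps_w(f)=0$ for all $1\leq|w|<n$. This gives (a) immediately for every $r\in R$.

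For (b), let $w=(a_1\cdots a_n)$ have length $n$ and $r\in R$. The entries of $\rho_w(r)$ at positions $(i,j)$ with $i<j$ are the coefficients $\eps_{(a_i\cdots a_{j-1})}(r)$, and $(i,j)\neq(1,n+1)$ exactly when the word $(a_i\cdots a_{j-1})$ is a proper middle factor of $w$. Such a factor has length between $1$ and $n-1$. Its coefficient therefore vanishes by (a), and the diagonal entries are $1$. Hence $\rho_w(r)\in Z_n$, i.e.\ $w$ is compatible with the presentation.

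The main obstacle is the reduction in (a) from some minimal presentation to the given one, i.e.\ the isomorphism argument. Everything else is a direct consequence of the Magnus description of $F_{(n)}$.
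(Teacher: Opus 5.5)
Your proof is correct and follows the paper's route. Part (a) comes from the Magnus-coefficient description of $F_{(n)}$, and part (b) comes from (a) together with the definition of $\rho_w$. The one addition is your isomorphism argument, which makes explicit that the \emph{given} minimal presentation satisfies $R\subseteq F_{(n)}$; the paper uses this without comment.
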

\begin{proof}
(a) follows from the description of $F_{(n)}$ in terms of the (standard) Magnus coefficients.
(b) follows from (a) and the definition of $\rho_w$.
\end{proof}
 
\subsection{The Zassenhaus initial form map}
For the notion of \textsl{$p$-restricted algebras} we refer to \cite{DixonDuSautoyMannSegal99}*{\S12.1}, \cite{Jacobson62}*{Ch.\ 5, \S7}, or \cite{Gartner11}*{\S1.2}.

Given a pro-$p$ group $G$, the direct sum $\gr G=\bigoplus_{n\geq0}G_{(n)}/G_{(n+1)}$ is a graded $p$-restricted Lie $\dbF_p$-algebra, with Lie bracket induced by the commutator map, and structural map $(\cdot)^{[p]}$ induced by the map $g\mapsto g^p$ on $G$ \cite{DixonDuSautoyMannSegal99}*{Th.\ 12.8}.
Moreover, when $F$ is a free pro-$p$ group on $X$, $\gr F$ is a free graded $p$-restricted Lie $\dbF_p$-algebra, with basis consisting of the cosets in $F/F_{(2)}$ of all $x\in X$ \cite{Gartner11}*{Th.\ 1.3.8}.

We filter the algebra $\dbF_p\langle\langle X\rangle\rangle$ by setting $\dbF_p\langle\langle X\rangle\rangle_{(n)}$ to be the additive group of all formal power series $\sum_{|w|\geq n}c_ww$ with $c_w\in \dbF_p$.
This gives rise to a graded $\dbF_p$-algebra 
\[
\gr(\dbF_p\langle\langle X\rangle\rangle)=\bigoplus_{n\geq0}\gr_n(\dbF_p\langle\langle X\rangle\rangle), \quad
\gr_n(\dbF_p\langle\langle X\rangle\rangle=\dbF_p\langle\langle X\rangle\rangle_{(n)}/\dbF_p\langle\langle X\rangle\rangle_{(n+1)}.
\]
We further grade $\dbF_p\langle X\rangle$ by $\deg x=1$ for all $x\in X$.
Then there is a graded ring isomorphism 
\[
\gr(\dbF_p\langle\langle X\rangle\rangle)\xrightarrow{\sim} \dbF_p\langle X\rangle, \quad \bar x\mapsto x,
\]
where $\bar x$ is the coset of $x\in X$ in $\gr_1(\dbF_p\langle\langle X\rangle\rangle)$.
Considering $\dbF_p\langle X\rangle$ as a $p$-restricted Lie algebra in the natural manner, $\Lam$ induces a monomorphism of graded $p$-restricted Lie $\dbF_p$-algebras  
\[
\gr\,\Lam\colon\gr F\to\gr(\dbF_p\langle\langle X\rangle\rangle)\xrightarrow{\sim}\dbF_p\langle X\rangle, \quad
xF_{(2)}\mapsto x.
\]
Moreover, $\dbF_p\langle X\rangle$ is the $p$-restricted enveloping algebra of $\gr\, F$ under this embedding \cite{Gartner11}*{Prop.\ 1.2.7}.

Now for every $1\neq f\in F$ take $n\geq1$ maximal with $f\in F_{(n)}$.
We call the coset $fF_{(n+1)}$ of $f$ in $F_{(n)}/F_{(n+1)}$ the \textsl{initial form} of $f$, considered as an element of $\dbF_p\langle X\rangle$ via $\gr\,\Lam$.
For a minimal presentation $G=F/R$ as above, and taking $f=r\in R$, we define the \textsl{Zassenhaus initial form map}
by
\[
I\colon R\to\dbF_p\langle X\rangle, \quad r\mapsto (\gr\,\Lam)(rF_{(n+1)}),
\]
where we also set $I(1)=0$. 
Thus $I(r)=\sum_{|w|=n}\eps_w(r)w$.

\begin{rem}
\rm
In \cite{Gartner11} some of these connections are generalized to the context of maps $\tau\colon X\to\dbZ_{\geq1}$ and  the filtration $F_{(\tau,n)}$, $n=1,2\nek$ where $F_{(\tau,n)}$ consists of all $f\in F$ such that $\eps_w(f)=0$ whenever $w\in X^*$ and $1\leq \tau^*(w)<n$.
\end{rem}

\begin{rem}
\rm
In a similar way, we can construct an initial form map using the \textsl{lower $p$-central filtration} $G^{(n)}$, $n=1,2\nek$ of the pro-$p$ group $G$, defined inductively by
\[
G^{(1)}=G, \quad  G^{(n+1)}=(G^{(n)})^p[G,G^{(n)}]
\] 
for $n\geq1$.
For this, assume first that $p\neq2$.
The graded $\dbF_p$-module $\bigoplus_{n\geq1} G^{(n)}/G^{(n+1)}$ has the structure of a graded Lie $\dbF[\pi]$-algebra, where the Lie bracket is induced by the commutator map, and $\pi$ is an indeterminate which acts on the $n$th component by $\pi(gG^{(n+1)})=g^pG^{(n+1)}$.  

When $G=F$ is a free pro-$p$ group on the basis $X$, this algebra is the free graded Lie $\dbF_p[\pi]$-algebra on the images of the basis elements $x\in X$ in $F/F^{(2)}$ \cite{NeukirchSchmidtWingberg}*{Ch.\ III, p.\ 222, Remark}. 
It embeds in its enveloping algebra, which is $(\dbF_p[\pi])\langle X\rangle$ \cite{SerreLie}*{I, Th.\ 4.2}.
For a presentation $G=F/R$ as before, this gives rise in a similar way to the \textsl{lower $p$-central initial form map} $I\colon R\to (\dbF_p(\pi))\langle X\rangle$.

The remaining case where $p=2$ can be treated using Lazard's notion of \textsl{mixed Lie algebras} \cite{Lazard65}*{Ch.\ 2, \S3} -- see \cite{LabuteMinac11}.
\end{rem}

\subsection{Shuffle relations}
Let again $X=\{x_1\nek x_d\}$.
The structure of $H^2(F/F_{(n)})$, for $F$ free pro-$p$ on basis $X$, was computed in \cite{Efrat23} in terms of the shuffle algebra on $X$.
We now recall a part of this structure, that will be needed in the next section.

We write $\bigoplus_{w\in X^n}\dbZ w$ for the free $\dbZ$-module on the words in $X^*$ of length $n$. 
The \textsl{shuffle product} $u\sha v$ of words $u=(a_1\cdots a_s)$, $v=(a_{s+1}\cdots a_{s+t})$ in $X^*$  is defined by
\[
u\sha v=\sum_\pi(a_{\pi^{-1}(1)}\cdots a_{\pi^{-1}(s+t)})\in\bigoplus_{w\in X^{s+t}}\dbZ w,
\]
where $\pi$ ranges over all permutations of $1,2,\ldots, s+t$ such that 
\[
\pi(1)<\cdots<\pi(s), \quad \pi(s+1)<\cdots<\pi(s+t).
\]
Let $\Shu_n(X)$ be the $\dbZ$-submodule of $\bigoplus_{w\in X^n}\dbZ w$ generated by all shuffle products $u\sha v$, where  $n=|u|+|v|$, $|u|,|v|\geq1$.

As before, let $F$ be the free pro-$p$ group on the basis $X$, and let $\chi_1\nek\chi_d$ be an $\dbF_p$-linear basis of $H^1(F)$ which is dual to $x_1\nek x_d$.
For $n\geq2$ one has $H^1(F/F_{(n)})\isom H^1(F)$ via inflation, so we may consider $\chi_1\nek\chi_d$ also as a basis of $H^1(F/F_{(n)})$.
Further, $\grz(F/F_{(n)})=n$, so the $n$-fold Massey product for $F/F_{(n)}$ is a single-valued multi-linear map (See \S\ref{subsection on the p-Zassenhaus invariant}).
Therefore for every word $w=(x_{l_1}\nek x_{l_n})$ in $X^n$ we may view $\bar\alp_w=\langle\chi_{l_1}\nek\chi_{l_n}\rangle$ as an element of $H^2(F/F_{(n)})$.

By \cite{Efrat23} or \cite{Vogel04}*{Example 1.2.11}, the $\dbZ$-module homomorphism
\[
\bigoplus_{w\in X^n}\dbZ w\to H^2(F/F_{(n)}), \quad(k_w)_{w\in X^n}\mapsto \sum_{w\in X^n}k_w\bar\alp_w. 
\] 
is trivial on $\Shu_n(X)$.
When $n<p$ there is an induced monomorphism
\begin{equation}
\label{monomorphism words to cohomology}
\Bigl(\bigl(\bigoplus_{w\in X^n}\dbZ w\bigr)/\Shu_n(X)\bigr)\tensor\dbF_p\to H^2(F/F_{(n)}).
\end{equation}

\begin{rem}
\label{image of Lyndon words in cohomology}
\rm
For a fixed total order on $X$, the cosets of the Lyndon words $w$ in $X^n$ form an $\dbF_p$-linear basis for the left linear space in (\ref{monomorphism words to cohomology}) \cite{Efrat23}.
Therefore the image of the monomorphism (\ref{monomorphism words to cohomology}) is generated by all $n$-fold Massey products $\bar\alp_w$ corresponding to Lyndon words $w\in X^*$ of length $n$.
\end{rem}

In particular, we obtain \textsl{shuffle relations} among $n$-fold Massey products in $H^2(F/F_{(n)})$, e.g., as follows:

\begin{examples}
\label{shuffle relations examples}
\rm
\begin{enumerate}
\item[(1)]
For $n=3$, $u=(x_{l_1}x_{l_2})$ and $v=(x_{l_3})$ one has
\[
\langle\chi_{l_1},\chi_{l_2},\chi_{l_3}\rangle+\langle\chi_{l_1},\chi_{l_3},\chi_{l_2}\rangle
+\langle\chi_{l_3},\chi_{l_1},\chi_{l_2}\rangle=0.
\]
\item[(2)] 
For $n=4$, $u=(x_{l_1})$ and $v=(x_{l_2}x_{l_3}x_{l_4})$ one has
\[
\langle\chi_{l_1},\chi_{l_2},\chi_{l_3},\chi_{l_4}\rangle+
\langle\chi_{l_2},\chi_{l_1},\chi_{l_3},\chi_{l_4}\rangle+
\langle\chi_{l_2},\chi_{l_3},\chi_{l_1},\chi_{l_4}\rangle+
\langle\chi_{l_2},\chi_{l_3},\chi_{l_4},\chi_{l_1}\rangle=0.
\]
\item[(3)]
For $n=4$, $u=(x_{l_1}x_{l_2})$ and $v=(x_{l_3}x_{l_4})$ one has
\[
\begin{split}
&\langle\chi_{l_1},\chi_{l_2},\chi_{l_3},\chi_{l_4}\rangle+
\langle\chi_{l_1},\chi_{l_3},\chi_{l_2},\chi_{l_4}\rangle+
\langle\chi_{l_1},\chi_{l_3},\chi_{l_4},\chi_{l_2}\rangle\\
&+\langle\chi_{l_3},\chi_{l_1},\chi_{l_2},\chi_{l_4}\rangle+
\langle\chi_{l_3},\chi_{l_1},\chi_{l_4},\chi_{l_2}\rangle+
\langle\chi_{l_3},\chi_{l_4},\chi_{l_1},\chi_{l_2}\rangle=0.
\end{split}
\]
\item[(4)] 
For every $n\geq2$ the shuffle relations imply in a formal way that
\[
\langle \chi_{l_1}\nek\chi_{l_n}\rangle=(-1)^{n-1}\langle\chi_{l_n}\nek\chi_{l_1}\rangle.
\]
See \cite{Efrat17}*{Example 9.7}.
\end{enumerate}
\end{examples}

Now given a pro-$p$ group $G$ with $\grz(G)=n$ and a minimal finite presentation $G=F/R$ with $R\subseteq F_{(n)}$, one has $F/F_{(n)}\isom G/G_{(n)}$.
The composed homomorphism
\[
H^2(F/F_{(n)})\xrightarrow{\sim} H^2(G/G_{(n)})\xrightarrow{\inf}H^2(G)
\]
maps $\bar\alp_w$, for a word $w=(x_{l_1}\cdots x_{l_n})$, to $\langle\chi_{l_1}\nek\chi_{l_n}\rangle\in H^2(G)$, where here $\chi_{l_1}\nek \chi_{l_n}$ are considered as elements of $H^1(G)$.
Therefore the shuffle relations of Example \ref{shuffle relations examples} hold also in $H^2(G)$.

\section{Massey product criteria}
\label{section on Massey product criteria}
We can now translate Theorem \ref{partition criterion} to the context of Massey products.

\begin{thm}
\label{theorem on Massey products}
Let $G$ be a finitely presented pro-$p$ group.
Let $s,k_0,k_1\nek k_s$ be positive integers and set $n=k_0+k_1+\cdots+k_s$.
Let $U_0\nek U_s$ be nonzero $\dbF_p$-linear subspaces of $H^1(G)$ such that $H^1(G)=U_0\oplus\cdots\oplus U_s$.
Assume that:
\begin{enumerate}
\item[(a)]
$\grz(G)=n$.
\item[(b)]
One has $\langle\varphi_1\nek\varphi_n\rangle=0$ whenever $\varphi_1\nek\varphi_n\in H^1(G)$ and more than $k_j$ of them belong to $U_j$ for some $1\leq j\leq s$.
\item[(c)]
The image of the $n$-fold Massey product map 
\[
\langle\cdot\nek\cdot\rangle\colon U_0^{k_0}\times\cdots\times U_s^{k_s}\to H^2(G)
\]
generates $H^2(G)$.
\end{enumerate}
Then there exists a minimal presentation $G=F/R$ which is mild with respect to the initial form map $I\colon R\to \dbF_p\langle X\rangle$, $I(r)=\sum_{w\in X^n}\eps_w(r)w$.
\end{thm}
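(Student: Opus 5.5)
The plan is to reduce Theorem \ref{theorem on Massey products} to Theorem \ref{partition criterion} by choosing a basis of $H^1(G)$ adapted to the decomposition $U_0\oplus\cdots\oplus U_s$, and then lifting it to a presentation. First I fix, by (a), a minimal presentation $G=F/R$ with $R\subseteq F_{(n)}$. For each $j$ I choose a basis of $U_j$. Together these give a basis $\chi_1\nek\chi_d$ of $H^1(G)=H^1(F)$, identified via inflation. Since $F$ is free pro-$p$, there is a free basis $X=\{x_1\nek x_d\}$ of $F$ dual to $\chi_1\nek\chi_d$: lift the dual basis of $F/F^p[F,F]$ and use the Frattini argument. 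Put $Y_j=\{x_i : \chi_i\in U_j\}$, so $X=Y_0\discup\cdots\discup Y_s$ with every $Y_j$ nonempty, because the $U_j$ are nonzero. With respect to this basis we use the standard Magnus homomorphism, whose degree-one coefficients satisfy $\eps_{(x_i)}=\chi_i$ on $F$ (Example \ref{standard Magnus homomorphism}).

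Next I identify the objects appearing in Theorem \ref{partition criterion}. By Lemma \ref{Zassenhaus invariant and compatibility}(b), every word of length $n$ is compatible with the presentation, so $C=X^n$. For $w=(x_{l_1}\cdots x_{l_n})$, (\ref{Massey elements and Magnus coefficients}) gives $\alp_w\in\langle\chi_{l_1}\nek\chi_{l_n}\rangle$. Since $\grz(G)=n$, this Massey product is single valued and multilinear (\S\ref{subsection on the p-Zassenhaus invariant}), so $\alp_w=\langle\chi_{l_1}\nek\chi_{l_n}\rangle$.

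Now I check the hypotheses of Theorem \ref{partition criterion}. For (i): if $w$ contains more than $k_j$ letters from $Y_j$ for some $j\geq1$, then more than $k_j$ of the $\chi_{l_i}$ lie in $U_j$, and hypothesis (b) gives $\alp_w=0$. For (ii): take $(\varphi_1\nek\varphi_n)\in U_0^{k_0}\times\cdots\times U_s^{k_s}$ and expand each $\varphi_i$ in the chosen basis of the $U_j$ containing it. By multilinearity, $\langle\varphi_1\nek\varphi_n\rangle$ is an $\dbF_p$-combination of the $\alp_w$ with $w\in\prod_j Y_j^{k_j}=C\cap\prod_jY_j^{k_j}$. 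So hypothesis (c) implies that these $\alp_w$ generate $H^2(G)$.

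Theorem \ref{partition criterion} then yields mildness with respect to $I(r)=\sum_{w\in X^n}\eps_w(r)w$, which is exactly the initial form map in the statement. The main point requiring care is justifying that $\alp_w$ equals the single-valued Massey product, which rests on the uniqueness result cited from Vogel, and making sure the dual free basis can be chosen compatibly with the decomposition. Everything else is formal.
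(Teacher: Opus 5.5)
Your proposal is correct and follows the paper's proof essentially step by step. You choose a basis of $H^1(G)=H^1(F)$ adapted to $U_0\oplus\cdots\oplus U_s$, take the dual free basis $X=Y_0\discup\cdots\discup Y_s$ with the standard Magnus homomorphism, set $C=X^n$ via Lemma \ref{Zassenhaus invariant and compatibility}(b), and derive conditions (i) and (ii) of Theorem \ref{partition criterion} from hypotheses (b) and (c) using single-valuedness and multilinearity. Fixing a presentation with $R\subseteq F_{(n)}$ and justifying the dual basis by the Frattini argument only make explicit what the paper leaves implicit.
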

\begin{proof}
Take a minimal presentation $G=F/R$.
Denote $d=\dim_{\dbF_p}H^1(G)$ and identify $H^1(G)=H^1(F)$ via the inflation isomorphism.
Let $\chi_1\nek\chi_d$ be an $\dbF_p$-linear basis of $H^1(F)$ which is the union of bases of $U_0\nek U_s$.

Take $x_1\nek x_d\in F$ such that $\chi_j(x_i)=1$, if $i=j$, and $\chi_j(x_i)=0$ if $i\neq j$.
Then $X=\{x_1\nek x_d\}$ forms a basis of the free pro-$p$ group $F$.
We partition $X=Y_0\discup Y_1\discup\cdots\discup Y_s$, where for each $j=0,1\nek s$ the homomorphisms  $\chi_l$ with $x_l\in Y_j$ form a linear basis of $U_j$.  
For the standard pro-$p$ Magnus homomorphism $\Lam\colon F\to\dbF_p\langle\langle X\rangle\rangle^{\times,1}$, $x_i\mapsto 1+x_i$, we have $\eps_{(x_i)}=\chi_i$.

We now apply Theorem \ref{partition criterion} with $C=X^n$.
By (a) and Lemma \ref{Zassenhaus invariant and compatibility}(b), every word $w\in X^n$ is compatible with the presentation.

To verify condition (i) of the theorem, take $w=(x_{l(1)}\cdots x_{l(n)})\in X^n$ with more than $k_j$ letters from $Y_j$ for some $1\leq j\leq s$.
Thus more than $k_j$ of the homomorphisms $\chi_{l(1)}\nek\chi_{l(n)}$ belong to $U_j$.
By (\ref{Massey elements and Magnus coefficients}) and (b),
\[
\alp_w=\langle\eps_{(x_{l(1)})}\nek\eps_{(x_{l(n)})}\rangle=\langle\chi_{l(1)}\nek \chi_{l(n)}\rangle=0,
\]
as required.

Similarly, and in view of the multi-linearity of the Massey product, assumption (c) gives condition (ii) of Theorem \ref{partition criterion}.
\end{proof}

\begin{examples}
\rm
(1) \quad
The case $s=1$ of Theorem \ref{theorem on Massey products} is \cite{Gartner15}*{Th.\ 4.9}.

\medskip

(2) \quad
The case $s=1$, $k_1=k_2=1$, $n=2$, was earlier proved by Schmidt in \cite{Schmidt10}*{Th.\ 6.2} (see also \cite{Schmidt07}*{Th.\ 5.5}) in the case $p>2$.
A proof covering also the case $p=2$ was given in \cite{Gartner15}*{Remark 4.10(i)}.
Recall that the $2$-fold Massey product is just the cup product (see \S\ref{subsection on Massey products}), so conditions (b) and (c) of Theorem \ref{theorem on Massey products} can be restated in this case as follows:
\begin{enumerate}
\item[(a')]
The cup product map $\cup\colon U_1\tensor U_1\to H^2(G)$ is zero.
\item[(b')]
The cup product map $\cup\colon U_0\tensor U_1\to H^2(G)$ is surjective;
\end{enumerate}
Since always $\grz(G)\geq2$, condition (a) of the theorem follows from (b'), as $H^2(G)\neq0$.

\medskip

(3) \quad
The case $s=1$, $k_0=1$, $k_1=2$, $n=3$ was earlier proved in \cite{Gartner15}*{Remark 4.10(ii)}.
Furthermore, take a linear base $\chi_1\nek\chi_c$ of $U_0$ and a linear base $\chi_{c+1}\nek\chi_d$ of $U_1$, where $1\leq c<d$.
We claim that then conditions (a)--(c) of Theorem \ref{theorem on Massey products} can be restated as:
\begin{enumerate}
\item[(a')]
$\chi_i\cup\chi_j=0$ for all $1\leq i\leq j\leq d$;
\item[(b')]
$\langle \chi_i,\chi_j,\chi_l\rangle=0$ for all $c<i,j,l\leq d$;
\item[(c')]
The 3-fold Massey products $\langle \chi_i,\chi_j,\chi_l\rangle$, where $1\leq i\leq c$ and $c<j,l\leq d$, generate $H^2(G)$.
\end{enumerate}
Indeed, (a) and (a') are equivalent since the $2$-fold Massey product is the cup product.
The equivalence of (b) and (b') follows from the multi-linearity of the 3-fold Massey product (as $\grz(G)=3$).
Finally, the equivalence of (c) and (c') follows from the shuffle relations in Examples \ref{shuffle relations examples}(1)(4).
In view of Example \ref{small Lyndon words}, the restrictions on $i,j,l$ in (c') amount to restricting to Lyndon words. 

\medskip

(4) \quad
Similarly, the case $s=1$, $k_0=2$, $k_1=1$, $n=3$ was also noted in  \cite{Gartner15}*{Remark 4.10(ii)}.
Indeed, take $1\leq c<d$ as in the previous example.
Then assumptions (a)--(c) can be restated as:
\begin{enumerate}
\item[(a')]
$\chi_i\cup\chi_j=0$ for all $1\leq i\leq j\leq d$;
\item[(b')]
$\langle \chi_i,\chi_j,\chi_l\rangle=0$ whenever at least two of $i,j,l$ are $>c$;
\item[(c')]
The 3-fold Massey products $\langle \chi_i,\chi_j,\chi_l\rangle$, where $1\leq i,j\leq c$ and $c<l\leq d$, generate $H^2(G)$.
\end{enumerate}
Here we use again the shuffle relations of Examples \ref{shuffle relations examples}(1)(4).
Likewise, the restrictions on $i,j,l$ in (c') amount to restricting to Lyndon words with respect to the total order $x_1>\cdots>x_d$.

\medskip

(5) \quad
Higher values of $n$ can be handled similarly.
For instance, let $s=2$, $k_0=k_1=1$, $k_2=2$, $n=4$, and let $1\leq c<e<d$.
Take a linear basis $\chi_1\nek\chi_d$ of $H^1(G)=H^1(F)$.
We obtain that $G$ is mild provided that the following conditions hold:
\begin{enumerate}
\item[(a)]
$\chi_i\cup\chi_j=0$ for all $i,j$;
\item[(b)]
$\langle\chi_i,\chi_j,\chi_l\rangle=0$ for all $i,j,l$;
\item[(c)]
$\langle\chi_i,\chi_j,\chi_l,\chi_q\rangle=0$ whenever at least two of $i,j,l,q$ are $>c$ and $\leq e$;
\item[(d)]
$\langle\chi_i,\chi_j,\chi_l,\chi_q\rangle=0$ whenever at least theree of $i,j,l,q$ are $>e$;
\item[(e)]
The 4-fold Massey products $\langle \chi_i,\chi_j,\chi_l,\chi_q\rangle$ generate $H^2(G)$.
\end{enumerate}
By Remark (\ref{image of Lyndon words in cohomology}), in condition (b) (resp., condition (d)) we may further restrict ourselves to Massey products such that $(x_ix_jx_l)$ (resp., $(x_ix_jx_lx_q)$) is a Lyndon word with respect to some fixed total order on $X$.

\medskip

(6) \quad
Similarly, let $s=1$, $k_0=k_1=2$, $n=4$, and let $1\leq c<d$.
Take a linear basis $\chi_1\nek\chi_d$ of $H^1(G)=H^1(F)$.
We obtain that $G$ is mild provided that the following conditions hold:
\begin{enumerate}
\item[(a)]
$\chi_i\cup\chi_j=0$ for all $1\leq i\leq j\leq d$;
\item[(b)]
$\langle\chi_i,\chi_j,\chi_l\rangle=0$ for every $i,j,l$.
\item[(c)]
$\langle\chi_i,\chi_j,\chi_l,\chi_q\rangle=0$ whenever at least three of $i,j,l,q$ are $>c$;
\item[(d)]
The 4-fold Massey products $\langle \chi_i,\chi_j,\chi_l,\chi_q\rangle$ generate $H^2(G)$.
\end{enumerate}
As before, in condition (b) (resp., condition (d)) we may further restrict ourselves to Massey products such that $(x_ix_jx_l)$ (resp., $(x_ix_jx_lx_q)$) is a Lyndon word with respect to some fixed total order on $X$.
\end{examples}

\section{The non-singular circuit criterion}
\label{section on the non-singular circuit criteria}
In this section we show how Theorem \ref{main theorem on mildness} implies mildness criteria in the spirit of Labute's non-circular circuit criterion.

As before, let $F$ be a free pro-$p$ group on the basis $X=\{x_1\nek x_d\}$ and let $G=F/R$ be a finitely presented pro-$p$ group as in (\ref{minimal presentation}).
Suppose that the presentation is of \textsl{Koch type}, which means that $1\leq m\leq d$ and the defining relations $r_1\nek r_m$ in $R$ satisfy 
\[
r_j=x_j^{pa_j}\prod_{{k=1}\atop{k\neq j}}^d[x_j,x_k]^{a_{jk}}\pmod{F_{(3)}}
\]
with $0\leq a_j,a_{jk}\leq p-1$.

The following \textsl{non-singular circuit} criterion for mildness was proved by Labute \cite{Labute06} using Lie algebra techniques and under the additional assumptions that $p\neq2$, and the $a_{ji}$ which appear in condition (i) below are nonzero.
Note however that in \cite{Labute06} the mildness is with respect to the lower $p$-central filtration rather than the $p$-Zassenhaus filtration.
Below the coefficient maps $\eps_w$ are taken with respect to the standard Magnus homomorphism (Example \ref{standard Magnus homomorphism}).

\begin{thm}
\label{circuit d=m}
Assume that $d=m\geq4$ and that the following conditions hold:
\begin{enumerate}
\item[(i)]
$a_{12}a_{23}\cdots a_{d-1,d}a_{d1}\neq a_{21}a_{32}\cdots a_{d,d-1}a_{1d}$.
\item[(ii)]
$a_{jk}=0$ if $1\leq j,k\leq d$ are distinct and are both odd.
\end{enumerate}
Then the presentation $G=F/R$ is mild with respect to the initial form map $I(r)=\sum_{w\in X^2}\eps_w(r)w$.
\end{thm}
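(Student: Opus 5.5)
We apply Theorem \ref{main theorem on mildness} with $\tau=1$, $A=X^2$ and the standard Magnus homomorphism. Since $R\subseteq F^p[F,F]=F_{(2)}$, every word of length $2$ is compatible with the presentation (Example \ref{compatibility for n=2}). With this $A$, the initial form map of Theorem \ref{main theorem on mildness} is exactly $I(r)=\sum_{w\in X^2}\eps_w(r)w$. Split $X=Y_0\discup Y_1$, with $Y_0$ the even-indexed letters and $Y_1$ the odd-indexed letters, and put $B=Y_0\times Y_1$, the words $(x_jx_k)$ with $j$ even and $k$ odd. Then the first letters and the last letters of words in $B$ lie in disjoint sets, and all words have length $2$. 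Hence $B$ is combinatorially free, which is condition (a).

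The first computation is the matrix $M(\tilde r,B)$. For a commutator one has $\eps_{(xy)}([x,y])=1$ and $\eps_{(yx)}([x,y])=-1$, and $\eps_{(xy)}$ vanishes on $F_{(3)}$ and on $p$-th powers for $x\neq y$. Also $\eps_{(x_jx_k)}$ is additive on $F_{(2)}$. Together these give, for $j\neq k$,
\[
\eps_{(x_jx_k)}(r_i)=\delta_{ij}a_{ik}-\delta_{ik}a_{ij},
\]
and for $j=k$ the value is $\binom p2a_j\delta_{ij}$.

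Observe first that $d$ must be even. If $d$ were odd, then $x_d$ and $x_1$ would both be odd-indexed, so (ii) gives $a_{d1}=a_{1d}=0$ and both sides of (i) vanish, contradicting (i).

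For condition (c) I would use the circuit columns
\[
(x_2x_1),(x_2x_3),(x_4x_3),(x_4x_5)\nek(x_dx_{d-1}),(x_dx_1)\in B.
\]
The resulting $d\times d$ submatrix of $M(\tilde r,B)$ is cyclically bidiagonal: each column $(x_jx_k)$ has entries only in rows $j$ and $k$, namely $a_{jk}$ and $-a_{kj}$. Its determinant is therefore $\pm$ the product of the entries in one cyclic "direction" $\pm$ the product of the entries in the other. After tracking signs, which come from the factors $(-1)^{d/2}$ and from the sign of a $d$-cycle with $d$ even, this should be $\pm\bigl(a_{12}a_{23}\cdots a_{d1}-a_{21}a_{32}\cdots a_{1d}\bigr)$. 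By (i) this is nonzero, so the rank is $d=m$. Theorem \ref{conditions on the matrix M}(b)$\Rightarrow$(c) then shows that the $\alp_w$, $w\in B$, generate $H^2(G)$.

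For condition (b) I would use the ordered monoid of Lemma \ref{g gives an ordered monoid} with $s=1$ and $\sig_1$ the characteristic function of $Y_1$ (odd letters). On $X^2$ the key data are the values of $(\sig_1^*,\sig_1^\#)$:
\[
(0,0)\ \text{on}\ Y_0^2,\qquad (1,1)\ \text{on}\ Y_1\times Y_0,\qquad (1,2)\ \text{on}\ Y_0\times Y_1=B,\qquad (2,3)\ \text{on}\ Y_1^2.
\]
Thus the only words exceeding elements of $B$ but lying outside $B$ are in $Y_1^2$. For these, the displayed formula combined with (ii) gives $\eps_w(r_i)=0$ for all $i$ when $w=(x_jx_k)$ with $j\neq k$ both odd. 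For $w=(x_jx_j)$ one gets $\binom p2a_j$, which is $0$ when $p\neq2$. Hence $\alp_w=0$ by (\ref{Magnus coefficients as transgression pairings}) and perfectness of the transgression pairing, and (b) holds. This is essentially Proposition \ref{A max} with the roles arranged so that $Y_1^2$ has vanishing $\alp$.

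The main obstacle I expect is the sign bookkeeping in the cyclic determinant, checking that it yields the difference in (i) rather than a sum. This is exactly where the evenness of $d$ enters; for $p=2$ the sign issue disappears. The second obstacle is the squares $(x_jx_j)$ with $j$ odd when $p=2$. There I would try to modify the order so that these words fall below $B$, for instance by inserting before $\sig_1$ a component that penalizes repeated letters. Failing that, I would show directly that such a diagonal term never becomes the leading term of any $I(r_j)$ produced by the echelon form.
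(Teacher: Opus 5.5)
Your argument is complete for $p$ odd, and it is the paper's proof in mirror image. The paper applies Theorem~\ref{main theorem on mildness} with these choices:
\begin{itemize}
\item the length-lexicographic order in which all even letters precede all odd ones;
\item $B$ the words (odd)(even);
\item circuit columns $(x_1x_2),(x_1x_d),(x_3x_2),\ldots$.
\end{itemize}
You instead take $B=Y_0\times Y_1$ with the order of Lemma~\ref{g gives an ordered monoid}. So your proof is really Theorem~\ref{partition criterion} with $s=1$, $k_0=k_1=1$, $C=X^2$. In cohomological terms it is Schmidt's cup-product criterion with $U_1$ spanned by the odd $\chi_j$ and $U_0$ by the even ones. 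The two routes are equally good.

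Your sign worry is harmless. With your ordering of the circuit columns, the $(i,i)$ entry is $(-1)^ia_{i,i+1}$ and the $(i,i-1)$ entry is $(-1)^ia_{i,i-1}$ (indices mod $d$, so the corner entry is $-a_{1d}$). Hence the determinant is $(-1)^{d/2}\bigl(a_{12}a_{23}\cdots a_{d1}-a_{21}a_{32}\cdots a_{1d}\bigr)$. The relative minus sign comes only from the $d$-cycle.

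The $p=2$ obstacle is a genuine gap, and neither of your remedies can close it in general. Let $x_M$ be the largest letter in any ordered monoid structure on $X^*$. Then $ab\le x_Mb\le x_Mx_M$ for all $a,b\in X$, so $(x_Mx_M)$ is the largest word in $X^2$. For $p=2$ one has $\eps_{(x_Mx_M)}(r_M)=a_M$. Suppose $a_M\neq0$.
\begin{itemize}
\item Leading terms: the $I(r'_j)$ for any minimal generating system span the same space as the $I(r_j)$, since $I$ factors linearly through $R/R^p[F,R]$. So some $I(r'_j)$ has leading term $(x_Mx_M)$, and your fallback (``the diagonal term never becomes a leading term'') fails.
\item Condition (b): condition (c) gives some $w_1\in B$ with $\alp_{w_1}\neq0$. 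Since $\alp_{(x_Mx_M)}\neq0$ and $w_1\le(x_Mx_M)$, condition (b) of Theorem~\ref{main theorem on mildness} forces $(x_Mx_M)\in B$. A square is never combinatorially free, so no reordering works.
\end{itemize}
Conditions (i) and (ii) allow all $a_j\neq0$. For example, take $d=4$, $a_{12}=a_{23}=a_{34}=a_{41}=1$, $a_1=\cdots=a_4=1$ and all other $a_{jk}=0$. This case cannot be reached through Theorem~\ref{main theorem on mildness} in the basis $X$ at all.

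The paper's proof has exactly the same blind spot. Formula (\ref{formula for Magnus coeffcients}) only covers $(x_ix_k)$ with $i\neq k$. In the paper's order, $(x_ix_i)$ with $i$ odd lies above both $(x_1x_2)$ and $(x_1x_d)$, and by (i) one of these has nonzero $\alp$. So both arguments prove the statement for $p$ odd, and for $p=2$ only when $a_j=0$ for all odd $j$. You were right to flag the case; it needs an idea beyond leading-term arguments.
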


\begin{rem}
\label{d=m even}
\rm
Assumptions (i) and (ii) imply that $d=m$ is even.
\end{rem}

\begin{proof}
As before, we identify $H^1(G)=H^1(F)$ via inflation and take a linear basis $\chi_1\nek\chi_m$ f $H^1(F)$ which is dual to $X$, i.e., $\chi_j(x_i)$ is $1$ if $i=j$, and is $0$ if $i\neq j$.
Thus $\eps_{(x_i)}=\chi_i$ for every $i$.

Let $w=(x_ix_k)\in X^2$. 
By (\ref{Massey elements and Magnus coefficients}), $\alp_w=\eps_{(x_i)}\cup \eps_{(x_k)}=\chi_i\cup\chi_k$ (See \S\ref{subsection on Massey products}).
Further, $w$ is compatible with the presentation (Example \ref{compatibility for n=2}).
By (\ref{Magnus coefficients as transgression pairings}), for every $1\leq j\leq m$ we have
\[
\eps_w(r_j)=(\bar r_j,\alp_w)=(\bar r_j,\chi_i\cup\chi_k).
\]
Moreover, this is the exponent of $[x_i,x_k]$ in $r_j$ \cite{NeukirchSchmidtWingberg}*{Prop.\ 3.9.13}.
Therefore
\begin{equation}
\label{formula for Magnus coeffcients}\eps_w(r_j)=
\begin{cases}
0,&j\neq i,k,\\
a_{ik},&j=i,\\
-a_{ki},&j=k.
\end{cases}
\end{equation}

As $d$ is even (Remark \ref{d=m even}), we totally order $X$ by 
\[
x_2<x_4<\cdots<x_d<x_1<x_3<\cdots<x_{d-1}
\]
We order $X^*$ by the induced length-lexicographic order $\leq$, making $(X^*,\leq)$ an ordered monoid (Example \ref{length-lexicographic order}).

Now take $A=X^2$, and take $B$ to be the set of all words $(x_ix_k)$ with $i$ odd and $k$ even.
We verify conditions (a)--(c) of Theorem \ref{main theorem on mildness} with $\tau=1$.
%We have already noted the compatibility of $A$ compatible with the presentation.

Condition (a) is immediate.

For condition (b), take $w_1=(x_ix_k)\in B$ and $w_2=(x_{i'}x_{k'})\in X^2$ with $\alp_{w_1},\alp_{w_2}\neq0$ and $w_1\leq w_2$.
Then $i$ is odd and $k$ is even.
By the definition of the orders, $i'$ is also odd.
Furthermore, since the transgression pairing is perfect and $\alp_{w_2}\neq0$, there exists $j$ such that  $\eps_{w_2}(r_j)=(\bar r_j,\alp_{w_2})\neq0$ (See (\ref{Magnus coefficients as transgression pairings})).
By (\ref{formula for Magnus coeffcients}), either $j=i'$ and $a_{i'k'}\neq0$, or $j=k'$ and $a_{k'i'}\neq0$.
In both cases (ii) implies that $k'$ must be even.
Thus $w_2\in B$.

Finally we prove condition (c).
Denote $\tilde r=(r_1\nek r_m)$.
Let $B'$ be the subset of $B$ consisting of the words 
\[
\begin{split}
(x_1x_2)<(x_1x_d)&<(x_3x_2)<(x_3x_4)<\cdots\\
&<(x_{d-3}x_{d-4})<(x_{d-3}x_{d-2})<(x_{d-1}x_{d-2})<(x_{d-1}x_d).
\end{split}
\]
Using (\ref{formula for Magnus coeffcients}), we write down the matrix $M(\tilde r,B')=[\eps_w(r_j)]_{1\leq j\leq m,w\in B'}$ with the first two columns interchanged
(i.e., $(x_1x_d)$ appears before $(x_1x_2)$):
\begin{equation}
\label{matrix}
\begin{bmatrix}
a_{1d}&a_{12}&0&0&\cdots&0&0&0\\
0&-a_{21}&-a_{23}&0&\cdots&0&0&0\\
0&0&a_{32}&a_{34}&\cdots&0&0&0\\
\vdots&\vdots&\vdots&\ddots&\ddots&\vdots&\vdots&\vdots\\
0&0&0&0&\cdots&-a_{d-2,d-3}&-a_{d-2,d-1}&0\\
0&0&0&0&\cdots&0&a_{d-1,d-2}&a_{d-1,d}\\
-a_{d1}&0&0&0&\cdots&0&0&-a_{d,d-1}
\end{bmatrix}.
\end{equation}
By assumption (i), this matrix has a nonzero determinant, and therefore so does $M(\tilde r,B')$.
Since the full matrix $M(\tilde r,B)$ has $d=m$ rows, it therefore has rank $m$.
We conclude from Theorem \ref{conditions on the matrix M} that $\alp_w$, $w\in B$, generate $H^2(G)$, as required.
\end{proof}

We complement this by a mildness criterion for the case $m<d$.

\begin{thm}
\label{circuit m<d}
Assume that $1\leq m<d$ and the following conditions hold:
\begin{enumerate}
\item[(i)]
$a_{12},a_{23}\nek a_{m,m+1}\neq 0$.
\item[(ii)]
$a_{jk}=0$ if $1\leq j\leq m$ and $1\leq k\leq d$ are distinct and are both odd.
\end{enumerate}
Then the presentation $G=F/R$ is mild with respect to the initial term map $I(r)=\sum_{w\in X^2}\eps_w(r)w$.
\end{thm}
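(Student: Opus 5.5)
We follow the proof of Theorem \ref{circuit d=m}, applying Theorem \ref{main theorem on mildness} with $\tau=1$, $A=X^2$, the standard Magnus homomorphism, and a length-lexicographic order adapted to the parity of the indices. As there, let $\chi_1\nek\chi_d$ be the basis of $H^1(G)=H^1(F)$ dual to $X$. Every word $w=(x_ix_k)\in X^2$ is compatible with the presentation (Example \ref{compatibility for n=2}), and $\alp_w=\chi_i\cup\chi_k$. Since $m<d$, there are now no relations $r_j$ with $j>m$. The analogue of (\ref{formula for Magnus coeffcients}) therefore reads, for $1\leq j\leq m$:
\[
\eps_{(x_ix_k)}(r_j)=\begin{cases} a_{ik},& j=i,\\ -a_{ki},& j=k,\\ 0,& j\neq i,k. \end{cases}
\]
In particular, if $k>m$ only the case $j=i$ can occur. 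This comes from reading off the exponent of $[x_i,x_k]$ in $r_j$, exactly as before.

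Totally order $X$ by putting all even-indexed letters below all odd-indexed ones, e.g.\ $x_2<x_4<\cdots<x_1<x_3<\cdots$, with all $d$ letters included (no parity assumption on $d$ is needed now). Take $\leq$ to be the induced length-lexicographic order on $X^*$, which makes $(X^*,\leq)$ an ordered monoid by Example \ref{length-lexicographic order}. Let $B\subseteq X^2$ consist of the words $(x_ix_k)$ with $i$ odd and $k$ even.

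Condition (a) of Theorem \ref{main theorem on mildness} holds. Distinct words of equal length are never middle factors of one another. A proper prefix of a word in $B$ is a single odd-indexed letter, while a proper suffix is a single even-indexed letter, so they cannot coincide.

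For condition (b), let $w_1=(x_ix_k)\in B$ and $w_2=(x_{i'}x_{k'})$ with $\alp_{w_1},\alp_{w_2}\neq0$ and $w_1\leq w_2$. Since $i$ is odd, the choice of order forces $i'$ to be odd. By perfectness of the transgression pairing and (\ref{Magnus coefficients as transgression pairings}), some $1\leq j\leq m$ satisfies $\eps_{w_2}(r_j)\neq0$. There are two cases.
\begin{itemize}
\item If $j=i'$, then $a_{i'k'}\neq0$ with $i'\leq m$ odd, so (ii) forces $k'$ to be even.
\item If $j=k'\leq m$, then $a_{k'i'}\neq0$ with $i'$ odd, so again (ii) forces $k'$ to be even.
\end{itemize}
Hence $w_2\in B$. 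This is where hypothesis (ii), formulated with $j\leq m$ and $k\leq d$, is used in exactly the needed generality.

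The main step is condition (c), which we establish via Theorem \ref{conditions on the matrix M} by exhibiting an $m\times m$ submatrix of $M(\tilde r,B)$ with nonzero determinant. For each $1\leq j\leq m$ choose the word $u_j=(x_jx_{j+1})$ if $j$ is odd and $u_j=(x_{j+1}x_j)$ if $j$ is even. This is always possible because $j+1\leq m+1\leq d$, and each $u_j$ lies in $B$. By the formula above:
\begin{itemize}
\item the column of $u_j$ has entry $\pm a_{j,j+1}$ in row $j$, which is nonzero by (i);
\item its only other possibly nonzero entry is in row $j+1$, and only when $j+1\leq m$.
\end{itemize}
Thus the submatrix with columns $u_1\nek u_m$ is lower bidiagonal with nonzero diagonal, so its determinant is $\prod_j(\pm a_{j,j+1})\neq0$. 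Hence $M(\tilde r,B)$ has rank $m$, and Theorem \ref{conditions on the matrix M} shows that the $\alp_w$, $w\in B$, generate $H^2(G)$.

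Theorem \ref{main theorem on mildness} then gives mildness with respect to $I(r)=\sum_{w\in X^2}\eps_w(r)w$. The only points needing care are the sign and row bookkeeping in the bidiagonal matrix and the boundary case $j=m$, where $x_{m+1}$ carries no relation.
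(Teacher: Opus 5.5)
Your proposal is correct and is essentially the paper's proof. It uses the same parity-split length-lexicographic order and the same set $B$ of words $(x_ix_k)$ with $i$ odd and $k$ even. Your columns $u_1\nek u_m$ are exactly the first $m$ words $(x_1x_2),(x_3x_2),(x_3x_4),\ldots$ of the paper's $B'$, so you get the same lower bidiagonal $m\times m$ minor with determinant $\pm a_{12}a_{23}\cdots a_{m,m+1}$.

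One tacit point, which the paper's proof shares, is that your case analysis for (b) assumes $i'\neq k'$. Diagonal words are harmless for odd $p$, since $\alp_{(x_ix_i)}=\pm\chi_i\cup\chi_i=0$. For $p=2$, however, $\eps_{(x_ix_i)}(r_i)=a_i$, so an odd $i'\leq m$ with $a_{i'}\neq0$ makes $(x_{i'}x_{i'})>(x_1x_2)$ a violation of (b), and that case would need separate treatment.
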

\begin{proof}
\null
The argument is similar to the previous one, in particular, the formula (\ref{formula for Magnus coeffcients}) holds.
We totally order $X$ by 
\[
x_2<x_4<\cdots<x_d<x_1<x_3<\cdots<x_{d-1}
\]
for $d$ even, and by
\[
x_2<x_4<\cdots<x_{d-1}<x_1<x_3<\cdots<x_d
\]
for $d$ odd.
We order $X^*$ by the induced length-lexicographic order $\leq$, making $(X^*,\leq)$ an ordered monoid.

Verifying conditions (a) and (b) of Theorem \ref{main theorem on mildness} is similar to the previous proof.
To verify condition (c), we take the subset $B'$ of $B$ to be
\[
\begin{split}
(x_1x_2)<(x_3x_2)&<(x_3x_4)<\cdots\\
&<(x_{d-3}x_{d-4})<(x_{d-3}x_{d-2})<(x_{d-1}x_{d-2})<(x_{d-1}x_d),
\end{split}
\]
and consider the $m\times d$-submatrix of (\ref{matrix}) consisting of its $m$ upper rows.
Then the $m\times m$-submatrix of this matrix consisting of rows $2,3\nek m+1$ has determinant $\pm a_{12}\cdots a_{m,m+1}\neq0$, so the rank is $m$.
\end{proof}

\begin{rem}
\rm
In Theorem \ref{circuit d=m} and Theorem \ref{circuit m<d} we may replace in condition (ii) the assumption that $a_{jk}=0$ when $j,k$ are both odd by the condition that $a_{jk}=0$ when $j,k$ are both even, and obtain analogous results.
Indeed, in the proofs we need to order $X$ so that the $x_k$ with $k$ odd are smaller than those with $k$ even, and argue similarly.
\end{rem}

\section{Pro-$p$ RAAGs}
\label{section on pro-p RAAGs}
Let $(X,E)$ be an unoriented finite graph with no loops.
Thus $X=\{x_1\nek x_d\}$ is a finite nonempty set of $d$ vertices and $E$ is a set of unordered pairs $\{x_i,x_j\}$, $i\neq j$, from $X$.
Let $F$ be the free pro-$p$ group on basis $X$ and let $R$ be its closed normal subgroup generated by the commutators $[x_i,x_k]$ with $\{x_i,x_k\}\in E$ (note that the order of $x_i$, $x_k$ does not matter).
Then $G=F/R$ is the \textsl{pro-$p$ right-angled Artin group} (abbreviated: RAAG) associated with $(X,E)$.

Quadrelli, Snopce and Vannacci \cite{QuadrelliSnopceVannacci22}*{Th.\ F}, building on work by Weigel \cite{Weigel15}, proved that a pro-$p$ RAAG $G$ is mild if and only if the underlying graph $(X,E)$ is \textsl{triangle-free}, i.e., it does not contain any $3$-clique.
We now show how Theorem \ref{main theorem on mildness} implies an important special case of this result, namely the case of graphs with chromatic number $2$, i.e., a bipartite graph.
Note that such graphs are necessarily triangle-free.

\begin{thm}
\label{theorem on RAAGs}
Let $(X,E)$ be a graph as above with chromatic number $2$.
Then the associated pro-$p$ RAAG  $G=F/R$  is mild with respect to the $p$-Zassenhaus initial form map.
\end{thm}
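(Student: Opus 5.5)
We apply Theorem \ref{main theorem on mildness} with $\tau=1$, $A=X^2$ and the length-lexicographic order, in the same way as in the proof of Theorem \ref{circuit d=m}.

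Since the graph has chromatic number $2$, we can write $X=X_0\discup X_1$ with both parts nonempty and every edge joining $X_0$ to $X_1$. Possibly $E=\emptyset$; then $R$ is trivial and $G$ is free, a degenerate case we set aside. For each edge we orient the relation as $r_e=[x_i,x_k]$ with $x_i\in X_0$ and $x_k\in X_1$. This only changes $r_e$ up to inversion, so it does not change $R$. The relations lie in $F_{(2)}\setminus F_{(3)}$. They form a minimal system of normal generators, because their images in $R/R^p[F,R]$ are the independent classes of distinct basic commutators. Hence $m=|E|$, and by Example \ref{compatibility for n=2} every word of length $2$ is compatible with the presentation. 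By \cite{NeukirchSchmidtWingberg}*{Prop.\ 3.9.13}, as in (\ref{formula for Magnus coeffcients}), the value $\eps_{(x_ax_b)}(r_e)$ equals $1$ if $(a,b)=(i,k)$, equals $-1$ if $(a,b)=(k,i)$, and is $0$ otherwise. In particular $\alp_{(x_ax_b)}\neq0$ only if $\{x_a,x_b\}\in E$. This uses the perfectness of the transgression pairing and (\ref{Magnus coefficients as transgression pairings}).

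We totally order $X$ so that every letter of $X_1$ is smaller than every letter of $X_0$, and give $X^*$ the induced length-lexicographic order. We take
\[
B=\{(x_ix_k)\ |\ x_i\in X_0,\ x_k\in X_1\}.
\]
We check the three conditions of Theorem \ref{main theorem on mildness}.

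\textbf{Condition (a).} A proper prefix or suffix of a word in $B$ is a single letter. A proper prefix lies in $X_0$ and a proper suffix lies in $X_1$, and these sets are disjoint. Distinct words of the same length are never middle factors of one another. So $B$ is combinatorially free.

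\textbf{Condition (b).} Let $w_1=(x_ix_k)\in B$ and $w_2=(x_ax_b)$ with $w_1\leq w_2$ and $\alp_{w_2}\neq0$. Since $x_i\in X_0$ and $w_1\leq w_2$, the first letter $x_a$ is in $X_0$. Since $\alp_{w_2}\neq0$, the pair $\{x_a,x_b\}$ is an edge, and bipartiteness forces $x_b\in X_1$. Hence $w_2\in B$.

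\textbf{Condition (c).} By Theorem \ref{conditions on the matrix M}, it suffices to show that $M(\tilde r,B)$ has rank $m$. Each relation $r_e=[x_i,x_k]$ has coefficient $1$ in the column $(x_ix_k)\in B$. Every other relation has coefficient $0$ there, since distinct edges give distinct ordered pairs. So $M(\tilde r,B)$ contains an $m\times m$ identity submatrix and has rank $m$.

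Theorem \ref{main theorem on mildness} therefore gives mildness with respect to the initial form map $I(r)=\sum_{w\in X^2}\eps_w(r)w$. This is the $p$-Zassenhaus initial form map, because $R\subseteq F_{(2)}$ and the relations are not in $F_{(3)}$.

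The step needing most care is the coefficient formula together with the claim that $\alp_w\neq0$ forces $w$ to come from an edge. Once the sign conventions of \cite{NeukirchSchmidtWingberg}*{Prop.\ 3.9.13} are fixed, the rest of the argument is formal.
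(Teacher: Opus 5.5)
Your proof is correct and follows essentially the same route as the paper's: the same ordering of $X$ with one colour class above the other, the length-lexicographic order on $X^*$, $A=X^2$, and a check of conditions (a)--(c) of Theorem~\ref{main theorem on mildness} via the degree-$2$ Magnus coefficients of the commutator relations. The only differences are cosmetic: you take $B$ to be all words in $X_0\times X_1$ rather than only the edge words, which works equally well, and you explicitly justify that the commutator relations form a minimal generating system, a point the paper leaves implicit.
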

\begin{proof}
By assumption, there is a partition $X=Y_1\discup Y_2$ such that every edge in $E$ connects a vertex from $Y_1$ with a vertex from $Y_2$.

We choose a total order on $X$ in which the letters of $Y_1$ are larger than the letters of $Y_2$, and order $X^*$ length-lexicographically.
Take $n=2$, $A=X^2$ and let $B$ be the set of all words $(x_ix_k)$ such that $x_i\in Y_1$, $x_k\in Y_2$, and $\{x_i,x_k\}\in E$.
We work with the standard Magnus homomorphism (Example \ref{standard Magnus homomorphism}). 
We verify the conditions of Theorem \ref{main theorem on mildness}.

By Example \ref{compatibility for n=2}, the words in $A$ are compatible with the presentation.
It is straightforward to see that $B$ is combinatorially free.

For (b), take $w_1\in B$ and $w_2\in A$ with $\alp_{w_1},\alp_{w_2}\neq0$ and $w_1\leq w_2$.
Since the transgression pairing is perfect and by (\ref{Magnus coefficients as transgression pairings}), there exists a defining relation $r$ (corresponding to an edge) with $\eps_{w_2}(r)=(\bar r,\alp_{w_2})_2\neq0$.
We may write $r=[x_i,x_k]$ where $x_i\in Y_1$ and $x_k\in Y_2$.
Then
\begin{equation}
\label{Magnus expansion of commutator}
\Lam(r)=1+x_ix_k-x_kx_i+\cdots, 
\end{equation}
where the remaining terms are of degree $\geq3$.
Therefore $w_2$ is one of $(x_ix_k)$ or $w_2=(x_kx_i)$.
But by the choice of the order, $(x_kx_i)$ is strictly smaller than all the words in $B$.
Therefore $w_2=(x_ix_k)\in B$.

Finally, the matrix $M(\tilde r,B)=[\eps_w(r_j)]_{1\leq j\leq m,w\in B}$, where $r_1\nek r_m$ are the defining relations, is diagonal with $\pm1$ on the diagonal entries, by (\ref{Magnus expansion of commutator}).
Therefore it has rank $m$, so (c) also holds.
\end{proof}

\begin{exam}
\rm
Suppose that $d\geq4$ is even.
The ``line group" (resp., ``circle group") is defined by $G=F/R$, where $F$ is free on the basis $x_1\nek x_d$ and $R$ is generated by 
\[
[x_1,x_2],[x_2,x_3],[x_3,x_4],[x_4,x_5]\nek[x_{d-1},x_d],
\] 
resp.,
\[
[x_1,x_2],[x_2,x_3],[x_3,x_4],[x_4,x_5]\nek[x_{d-1},x_d],[x_d,x_1].
\] 
By Theorem \ref{theorem on RAAGs}, these groups are mild with respect to the Zassenhaus initial form map. 
For the circle group this was earlier shown in \cite{Gartner15}*{Example 2.10(i)}.
See also \cite{MinacPasiniQuadrelliTan22}*{\S5}.
\end{exam}

\end{document}